\numberwithin{equation}{section}
\newtheorem{thm}{Theorem}[section]
\newtheorem{lem}[thm]{Lemma}
\newtheorem{pro}[thm]{Proposition}
\newtheorem{cor}[thm]{Corollary}
\theoremstyle{definition}
\theoremstyle{remark}
\newtheorem{rem}[thm]{Remark}
\theoremstyle{remark}
\newcommand{\be}{\begin{equation}}
\newcommand{\ee}{\end{equation}}
\newcommand{\bea}{\begin{eqnarray*}}
\newcommand{\eea}{\end{eqnarray*}}
\newtheorem{question}[thm]{Question}
\newcommand{\Rmnum}[1]{\expandafter\@slowromancap\romannumeral #1@}
\newcommand{\abs}[1]{\left\lvert#1\right\rvert}
\newcommand{\beqa}{\begin{eqnarray}}
\newcommand{\eeqa}{\end{eqnarray}}
\newcommand{\eps}{\varepsilon}
\newcommand{\tr}{\mbox{tr\,}}
\renewcommand{\Re}{\mathsf{Re}}
\renewcommand{\Im}{\mathsf{Im}}
\newcommand{\E}{{\mathbb E }}
\renewcommand{\Pr}{{\mathbb P}}
\newcommand\tH{\widetilde H}
\newcommand\newbeta{\sqrt{\alpha}}
\newcommand\opnorm[1]{\left\|#1\right\|}
\newcommand\hsnorm[1]{\left\|#1\right\|_F}
\begin{document}

\title[Limiting ESD of the non-backtracking matrix for $G(n,p)$]{Limiting empirical spectral distribution for the non-backtracking matrix of an Erd\H os-R\'enyi random graph}

\author[K. Wang]{Ke Wang}
\address{Department of Mathematics, Hong Kong University of Science and Technology, Clear Water Bay, Kowloon, Hong Kong}
\email{kewang@ust.hk}

\author[P. M. Wood]{Philip Matchett Wood} 
\address{Department of Mathematics, Harvard University, 1 Oxford Street, Cambridge, MA 02138}
\email{pmwood@math.harvard.edu}

\thanks{Ke Wang was supported in part by Hong Kong RGC grants GRF 16308219, GRF 16304222, and ECS 26304920. Philip Matchett Wood was supported in part by NSA grant H98230-16-1-0301. }


\date{\today}

\begin{abstract} In this note, we give a precise description of the limiting empirical spectral distribution (ESD) for the non-backtracking matrices for an Erd\H{o}s-R\'{e}nyi graph $G(n,p)$ assuming $np/\log n$ tends to infinity.  We show that derandomizing part of the non-backtracking random matrix simplifies the spectrum considerably, and then we use Tao and Vu's replacement principle and the Bauer-Fike theorem to show that the partly derandomized spectrum is, in fact, very close to the original spectrum.  
\end{abstract}

\maketitle

\section{Introduction}\label{sec:intro}
For a simple undirected graph $G=(V,E)$, the non-backtracking matrix is defined as follows. For each $(i,j) \in E$, form two directed edges $i \to j$ and $j \to i$. The non-backtracking matrix $B$ is a $2|E| \times 2|E|$ matrix such that
$$B_{i\to j, k\to l}=
\begin{cases}
1 & \text{if}\ j=k ~\text{and}\ i\neq l \\
0 & \text{otherwise}.
\end{cases}
$$

The central question of the current paper is the following:

\begin{question}
What can be said about the eigenvalues of the non-backtracking matrix $B$ of random graphs as $|V|\to \infty$?
\end{question}

The non-backtracking matrix was proposed by Hashimoto \cite{H89}.
 The spectrum of the non-backtracking matrix for random graphs was studied by Angel, Friedman, and Hoory \cite{AFH} in the case where the underlying graph is the tree covering of a finite graph.  Motivated by the question of community detection (see \cite{Krz13,M2014,MNS13,MNS15}), Bordenave, Lelarge, and Massouli\'e \cite{BLM} determined the size of the largest eigenvalue and gave bounds for the sizes of all other eigenvalues for non-backtracking matrices when the underlying graph is drawn from a generalization of Erd\H os-R\'{e}nyi random graphs called the Stochastic Block Model (see \cite{HLL1983}), and this work was further extended to the Degree-Corrected Stochastic Block Model (see \cite{KN2011}) by Gulikers, Lelarge, and Massouli\'e \cite{GLM}. In  recent work,  Benaych-Georges, Bordenave and Knowles \cite{BBK17} studied the spectral radii of the sparse inhomogeneous Erd\H{o}s-R\'{e}nyi graph through a novel application of non-backtracking matrices.  Stephan and Massouli{\'e} \cite{SM20} also conducted a study on the non-backtracking spectra of weighted inhomogeneous random graphs.

In the current paper, we give a precise characterization of the limiting distribution of the eigenvalues for the non-backtracking matrix when the underlying graph is the Erd\H os-R\'{e}nyi random graph $G(n,p)$, where each edge $ij$ is present independently with probability $p$, and where we exclude loops (edges of the form $ii$).  We will allow $p$ to be constant or decreasing sublinearly with $n$, which contrasts to the bounds proved in \cite{BLM} and \cite{GLM} corresponding to the case $p=c/n$ with $c$ a constant.  Let $A$ be the adjacency matrix of $G(n,p)$, so $A_{ij}=1$ exactly when edge $ij$ is part of the graph $G$ and $A_{ij}=0$ otherwise; and let $D$ the diagonal matrix with $D_{ii}=\sum_{j=1}^n A_{ij}$.
Much is known about the eigenvalues of $A$, going back to works of Wigner in the 1950s 
\cite{Wigner1955,Wigner1958} (see also \cite{Grenader1963} and \cite{Arnold1967}), who proved that the distribution of eigenvalues follows the semicircular law  for any constant $p\in (0,1)$.  More recent results have considered the case where $p$ tends to zero, making the random graph sparse.  It is known that assuming $np\to \infty$, the empirical spectral distribution (ESD) of the adjacency matrix $A$ converges to the semicircle distribution (see for example \cite{KP93} or \cite{TVW13}). Actually, much stronger results have been proved about the eigenvalues of $A$ (see the surveys \cite{Vu2008} and \cite{BK16}). For example, Erd\H os, Knowles, Yau, and Yin \cite{EKYY} proved that as long as there is a constant $C$ so that $np > (\log n)^{C \log \log n}$ (and thus $np\to \infty$ faster than logarithmic speed), the eigenvalues of the adjacency matrix $A$ satisfy a result called the local semicircle law.  This law characterizes the distribution of the eigenvalues in small intervals that shrink as the size of the matrix $n$ increases. The most recent development regarding the local semicircle law can be found in \cite{HKM19} and \cite{ADK22}.

It has been shown in \cite{H, Bass, AFH} (for example, Theorem 1.5 from \cite{AFH}) that the spectrum of $B$ is the set $\{ \pm 1\} \cup \{\mu: \det(\mu^2 I - \mu A + D-I) =0\}$, 
or equivalently, the set $\{ \pm 1\} \cup \{\text{eigenvalues of } H \},$
where 
\begin{equation}\label{e:def_H}
H = \begin{pmatrix}
A & I-D\\
I & 0
\end{pmatrix}.
\end{equation}
We will call this $2n \times 2n$ matrix $H$ \emph{the non-backtracking spectrum operator for $A$}, and we will show that the spectrum of $H$ may be precisely described, thus giving a precise description of the eigenvalues of the non-backtracking matrix $B$.  We will study the eigenvalues of $H$ in two regions: the \emph{dense region}, where $p \in (0,1)$ and $p$ is fixed constant; and the \emph{sparse region}, where $p=o(1)$ and $np \to \infty$.  The \emph{diluted region}, where $p=c/n$ for some constant $c>1$, is the region for which the bounds in \cite{BLM} and \cite{GLM} apply, and, as pointed out by \cite{BLM}, it would be interesting to determine the limiting eigenvalue distribution  of $H$ in this region.




Note that $\E(D) = (n-1)p I$, and so we will let $$\alpha= (n-1)p-1$$ and consider the partly averaged matrix
\begin{equation}\label{e:def_H_0}
H_0= \begin{pmatrix}
A & I-\E(D)\\
I & 0
\end{pmatrix} = \begin{pmatrix}
A & -\alpha I\\
I & 0
\end{pmatrix}.
\end{equation}

The partly averaged matrix $H_0$ will be an essential tool in quantifying the eigenvalues of the non-backtracking spectrum operator $H$.  Three main ideas are at the core of this paper: first, that partial derandomization can greatly simplify the spectrum; second, that Tao and Vu's replacement principle \cite[Theorem~2.1]{TaoVu2010} can be usefully applied to two sequences of random matrices that are highly dependent on each other; and third, that in this case, the partly derandomized matrix may be viewed as a small perturbation of the original matrix, allowing one to apply results from perturbation theory like the Bauer-Fike Theorem.  The use of Tao and Vu's replacement principle here is novel,  as it is used to compare the spectra of a sequence of matrices with some dependencies among the entries to a sequence of matrices where all random entires are independent; 
typically, the Tao-Vu replacement principle has been applied in cases where the two sequences of random matrices both have independent entries, see for example \cite{TaoVu2010,Wood2012,Wood2016}.

\subsection{Results}

Throughout the rest of this paper, we mainly focus on sparse random graphs where $p\in (0,1)$ may tend to zero as $n\to \infty$.
Our first result shows that the spectrum of $H_0$ can be determined very precisely in terms of the spectrum of the random Hermitian matrix $A$, which is well-understood.

\begin{pro}[Spectrum of the partly averaged matrix]\label{t:H0spectrum}
Let $H_0$ be defined as in \eqref{e:def_H_0}, and let $0< p\le p_0<1$ for a constant $p_0$.  If $p\ge C/\sqrt{n}$ for some large constant $C>0$, then, with probability $1-o(1)$,  $\frac1\newbeta H_0$ has two real eigenvalues $\mu_1$ and $\mu_2$ satisfying $\mu_1=\newbeta (1+o(1))$ and $\mu_2 = 1/\sqrt{np} (1+o(1))$; all other eigenvalues for $\frac1\newbeta H_0$ are complex with magnitude $1$ and occur in complex conjugate pairs.   If $np \to\infty$ with $n$, then the real parts of the eigenvalues in the circular arcs are distributed according to the semicircle law.
\end{pro}  
\begin{rem}\label{rem:more} When $n^{-1+\epsilon}\le p\le n^{-1/2}$, more real eigenvalues of $H_0$ will emerge. We provide a short discussion on the real eigenvalues of $H_0$ in  Section \ref{sec:smallp}.  Note that as long as the number of real eigenvalues is bounded by a fixed constant,  for example when $p \ge C/\sqrt n$,   the bulk distribution of $H_0$ is two arcs on the unit circle,  with density so that real parts of the eigenvalues follow the semicircular law.
\end{rem}

The spectrum of the non-backtracking matrix for a degree regular graph was studied in \cite{B15}, including proving some precise eigenvalue estimates.  One can view Proposition~\ref{t:H0spectrum} as extending this general approach by using averaged degree counts, but allowing the graph to no longer be degree regular.  Thus, Proposition~\ref{t:H0spectrum} shows that partly averaging $H$ to get $H_0$ is enough to allow the spectrum to be computed very precisely.  Our main results are the theorems below, which show that the empirical spectral measures $\mu_H$ for $H$ and $\mu_{H_0}$ for $H_0$  are very close to each other, even for $p$ a decreasing function of $n$.  (The definitions of the measure $\mu_M$ for a matrix $M$ and the definition of almost sure convergence of measures are given in Section~\ref{ss:defs}).

\begin{thm}\label{t:bulkESD-intro}
Let $A$ be the adjacency matrix for an Erd\H os-R\'{e}nyi random graph $G(n,p)$. Assume $0< p\le p_0<1$ for a constant $p_0$ and $np/\log n \to \infty$ with $n$. Let $\frac 1 \newbeta H$ be a rescaling of the non-backtracking spectrum operator for $A$ defined in \eqref{e:def_H} with $\alpha = (n-1)p-1$, and let $\frac 1 \newbeta H_0$ be its partial derandomization, defined in \eqref{e:def_H_0}.  Then, $\mu_{\frac 1\newbeta H} - \mu_{\frac 1 \newbeta H_0}$ converges almost surely (thus, also in probability) to zero as $n$ goes to infinity.
\end{thm}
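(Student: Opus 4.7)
The plan is to apply Tao and Vu's replacement principle \cite[Theorem~2.1]{TaoVu2010} to the pair $M_n := \frac{1}{\sqrt{\alpha}} H$ and $M_n' := \frac{1}{\sqrt{\alpha}} H_0$, viewed as $2n \times 2n$ matrices. As emphasized in the introduction, this is a nonstandard application, since the two sequences share all randomness through $A$. The principle reduces the claim to two conditions: \textbf{(i)} a uniform Frobenius bound $\frac{1}{(2n)^2}(\|M_n\|_F^2 + \|M_n'\|_F^2) = O(1)$ almost surely; and \textbf{(ii)} for Lebesgue-almost every $z \in \C$, the log-determinant difference $\frac{1}{2n}(\log\abs{\det(M_n - zI)} - \log\abs{\det(M_n' - zI)})$ tends to zero almost surely. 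Condition (i) is routine: Bernstein's inequality on the degrees gives $\|I-D\|_F^2 = O(n\alpha^2)$ almost surely, while $\|A\|_F^2 = O(n^2 p)$, so both $\|H\|_F^2$ and $\|H_0\|_F^2$ are $O(n^2\alpha)$, matching the denominator $(2n)^2 \alpha$.

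The heart of the argument is condition (ii), which I would attack via a Schur-complement identity. Expanding the determinants by the invertible lower-right block gives
\[
\det(H - wI) = (-1)^n \det(wA - w^2I + I - D), \quad \det(H_0 - wI) = (-1)^n \det(wA - w^2I - \alpha I),
\]
and consequently, with $Q_w := wA - (w^2+\alpha)I$,
\[
\frac{\det(H - wI)}{\det(H_0 - wI)} = \det\bigl(I + Q_w^{-1}(\E D - D)\bigr).
\]
Setting $w = \sqrt{\alpha}\, z$, the shifted spectral parameter is $w + \alpha/w = \sqrt{\alpha}(z + 1/z)$, whose imaginary part equals $\sqrt{\alpha}\,\Im(z)(1 - 1/|z|^2)$. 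For Lebesgue-a.e.~$z \in \C$ (specifically, off the real axis and off the unit circle) this imaginary part is of order $\sqrt{\alpha}$, and since the spectrum of $A$ is real, one obtains $\|Q_{\sqrt{\alpha}z}^{-1}\|_{op} = O(1/\alpha)$. Combined with the concentration estimate $\|\E D - D\|_{op} = O(\sqrt{np\log n})$ (Bernstein plus union bound), this gives $\|Q_{\sqrt{\alpha} z}^{-1}(\E D - D)\|_{op} = O(\sqrt{\log n / \alpha}) = o(1)$ under the hypothesis $np/\log n \to \infty$. The Taylor bound $\abs{\log\abs{\det(I + X)}} \leq -n\log(1-\|X\|_{op}) = O(n\|X\|_{op})$ then yields (ii).

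The main obstacle is handling the exceptional set where the resolvent bound on $Q_w^{-1}$ degenerates: $z$ on or near the unit circle (where the spectrum of $M_n'$ accumulates per Theorem~\ref{t:H0spectrum}) and $z$ near the two real outliers. Because this set has Lebesgue measure zero, it does not obstruct condition (ii) of the replacement principle itself. For a more quantitative comparison in these regions---useful for locating individual outlier eigenvalues of $H$---one would invoke the Bauer-Fike theorem applied to the explicit spectral decomposition of $H_0$: its eigenvectors take the form $(\mu_j^\pm u_j, u_j)^T$ with $u_j$ orthonormal eigenvectors of $A$, so the condition number of the eigenvector matrix is controlled by the spectral gaps $\abs{\mu_j^+ - \mu_j^-} = \sqrt{\abs{\lambda_j^2 - 4\alpha}}$, which are $\Theta(\sqrt{\alpha})$ away from the edge of the semicircle. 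Semicircle rigidity \cite{EKYY} manages the $o(n)$ exceptional eigenvectors near the edge, so the pairing of eigenvalues of $H$ to those of $H_0$ is tight for the bulk and controlled in a thin exceptional layer.
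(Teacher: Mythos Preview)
Your Schur-complement route to condition~(ii) of the replacement principle is correct and is more direct than what the paper does. The paper verifies (ii) by bounding $\sigma_{\min}(\tH_0-zI)$ from below: it computes the characteristic polynomial of $(\tH_0-zI)(\tH_0-zI)^*$ explicitly and then checks, via a case analysis relegated to the appendix (Lemma~\ref{lem:sing_min}), that each singular value exceeds a constant $C_z>0$ whenever $\Im z\ne 0$ and $|z|\ne 1$. Your identity
\[
\frac{\det(H-wI)}{\det(H_0-wI)}=\det\bigl(I+Q_w^{-1}(\E D-D)\bigr),\qquad Q_w=wA-(w^2+\alpha)I,
\]
instead reduces the question to the resolvent of the \emph{Hermitian} matrix $A$ at the parameter $w+\alpha/w=\sqrt\alpha\,(z+1/z)$, whose imaginary part is $\sqrt\alpha\,|\Im z|\,\bigl|1-|z|^{-2}\bigr|$; normality of $Q_w$ then gives $\|Q_{\sqrt\alpha\, z}^{-1}\|\le C_z/\alpha$ in one line, bypassing the singular-value computation entirely.

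There is, however, a genuine gap in your treatment of condition~(i). Tao and Vu's hypothesis is that $\frac{1}{m}\|M_m\|_F^2$ be bounded, with $m=2n$ the matrix dimension---not $\frac{1}{m^2}$. With the correct denominator the condition \emph{fails} for $M=\frac{1}{\sqrt\alpha}H$: the block $I-D$ contributes $\|I-D\|_F^2=\sum_i(1-d_i)^2\asymp n(np)^2$, so
\[
\frac{1}{2n}\Bigl\|\tfrac{1}{\sqrt\alpha}H\Bigr\|_F^2\ \ge\ \frac{\|I-D\|_F^2}{2n\,\alpha}\ \asymp\ \frac{np}{2}\ \to\ \infty.
\]
This is precisely why the paper does not apply the replacement principle to $\frac{1}{\sqrt\alpha}H$ directly, but first conjugates by $\diag(\sqrt\alpha\,I,\,I)$ to obtain $\tH=\bigl(\begin{smallmatrix}A/\sqrt\alpha & (I-D)/\alpha\\ I & 0\end{smallmatrix}\bigr)$: the offending block becomes $(I-D)/\alpha$, whose Frobenius norm squared is $\asymp n$, and Lemma~\ref{lem:Cond1} then gives $\frac{1}{2n}\|\tH\|_F^2=O(1)$. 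Conjugation preserves eigenvalues and determinants, so your argument for~(ii) transfers unchanged to $\tH,\tH_0$; but you must pass to the conjugated matrices before invoking condition~(i). Your final paragraph on Bauer--Fike and edge rigidity is not needed for Theorem~\ref{t:bulkESD-intro}: the measure-zero exceptional set is already absorbed by the ``almost every $z$'' clause of the principle, as you yourself observe.
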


\begin{rem}When $p\gg \log n/n$, the graph $G(n,p)$ is almost a random regular graph and thus $H_0$ appears to be a good approximation of $H$. When $p$ becomes smaller, such an approximation is no longer accurate. In this sense, Theorem \ref{t:bulkESD-intro} is optimal.
\end{rem}

In Figure \ref{fig:compare}, we plot the eigenvalues of $\frac{1}{\sqrt{\alpha}}H$ and $\frac{1}{\sqrt{\alpha}}H_0$ for an Erd\H os-R\'{e}nyi random graph $G(n,p)$, where $n=500$. The blue circles mark the eigenvalues for $H/\sqrt{\alpha}$ and the red x's mark the eigenvalues for $H_0/\sqrt{\alpha}$. We can see that the empirical spectral measures of $H/\sqrt{\alpha}$ and  $H_0/\sqrt{\alpha}$ are very close for $p$ not too small.  As $p$ becomes smaller (note that here $\log n/n\approx0.0054$), the eigenvalues of $H_0/\sqrt{\alpha}$ still lie on the arcs of the unit circle whereas the eigenvalues of $H/\sqrt{\alpha}$ start to escape and be attracted to the inside of the circle. 

To prove that the bulk eigenvalue distributions converge in Theorem~\ref{t:bulkESD-intro}, we will use Tao and Vu's replacement principle \cite[Theorem~2.1]{TaoVu2010} (see also Theorem~\ref{t:Repl}), which was a key step in proving the circular law.  The replacement principle lets one compare eigenvalue distributions of two sequences of random matrices, and it has often been used in cases where one type of random input---for example, standard Gaussian normal entries---is replaced by a different type of random input---for example, arbitrary mean 0, variance 1 entries.  This is how the replacement principle was used to prove the circular law in \cite{TaoVu2010}, and it was used similarly in, for example, \cite{Wood2012, Wood2016}.  The application of the replacement principle in the current paper is distinct in that the entries for one ensemble of random matrices,  namely $H$,  has some dependencies between the entries,  whereas the random entries of $H_0$ are all independent.

Our third result (Theorem~\ref{thm:main-dense} below) proves that all eigenvalues of $H$ are close to those of $H_0$ with high probability when $p\gg \frac{\log^{2/3} n}{n^{1/6}}$, which implies that there are no outlier eigenvalues of $H$, that is, no eigenvalues of $H$ that are far outside the support of the spectrum of $H_0$ (described in Theorem \ref{t:H0spectrum}).

\begin{thm}\label{thm:main-dense}
Assume $0<p\le p_0<1$ for a constant $p_0$ and $p\ge \frac{\log^{2/3+\eps} n}{n^{1/6}}$ for $\eps>0$. Let $A$ be the adjacency matrix for an Erd\H os-R\'{e}nyi random graph $G(n,p)$. Let $\frac 1 \newbeta H$ be a rescaling of the non-backtracking spectrum operator for $A$ defined in \eqref{e:def_H}. Then, with probability $1-o(1)$, each eigenvalue of $\frac1 \newbeta H$ is within distance $R=
40\sqrt{\frac{\log n}{np^2}}$ of an eigenvalue of $\frac1 \newbeta H_0$, defined in \eqref{e:def_H_0}. 
%
\end{thm}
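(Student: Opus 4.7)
The plan is to view $H$ as a perturbation of $H_0$ via
\[
E \;=\; H - H_0 \;=\; \begin{pmatrix} 0 & -(D-\E D) \\ 0 & 0 \end{pmatrix},
\]
and to invoke a refined form of the Bauer--Fike theorem. Specifically, once $H_0 = V\Lambda V^{-1}$ is diagonalized, for any eigenpair $(\mu,y)$ of $H$ with $\mu\notin\sigma(H_0)$ the substitution $y = Vz$ in $(H_0 - \mu I)y = -Ey$ produces $z = -(\Lambda-\mu I)^{-1}V^{-1}EVz$, and taking operator norms yields
\[
\mathrm{dist}(\mu,\sigma(H_0)) \;\le\; \|V^{-1} E V\|_{\mathrm{op}}.
\]
The naive bound $\|V^{-1}EV\|_{\mathrm{op}} \le \kappa(V)\|E\|_{\mathrm{op}}$ is hopelessly lossy in our setting (both factors are polynomially large in $n$); the whole improvement will come from computing $V^{-1}EV$ explicitly and exploiting the block structure of $E$.

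\textbf{Diagonalizing $H_0$.} The eigenvalue equation $H_0\binom{x}{y} = \mu\binom{x}{y}$ reduces to $x = \mu y$ and $Ax = (\mu + \alpha/\mu)x$, so for each eigenvalue $\lambda_i$ of the Hermitian matrix $A$, $H_0$ has the pair $\mu_\pm^{(i)} = \tfrac12(\lambda_i \pm \sqrt{\lambda_i^2 - 4\alpha})$. Writing $A = U\Lambda_A U^T$ with $U$ orthogonal, $M_\pm := \mathrm{diag}(\mu_\pm^{(i)})$ and $\Delta := M_+ - M_-$, one computes
\[
V = \begin{pmatrix}U & 0 \\ 0 & U\end{pmatrix}\!\begin{pmatrix}M_+ & M_- \\ I & I\end{pmatrix},\quad V^{-1} = \begin{pmatrix}\Delta^{-1} & -\Delta^{-1}M_- \\ -\Delta^{-1} & \Delta^{-1}M_+\end{pmatrix}\!\begin{pmatrix}U^T & 0 \\ 0 & U^T\end{pmatrix}.
\]
Multiplying through and using that the only non-zero block of $E$ is the top-right one, a short calculation produces
\[
V^{-1} E V \;=\; \begin{pmatrix}-1 & -1 \\ 1 & 1\end{pmatrix}\otimes \bigl(\Delta^{-1}\wh D\bigr),\qquad \wh D := U^T(D-\E D)U,
\]
whence $\|V^{-1}EV\|_{\mathrm{op}} \le 2\,\|\Delta^{-1}\|_{\mathrm{op}}\,\|D-\E D\|_{\mathrm{op}}$, since the $2\times 2$ prefactor has operator norm $2$ and $\wh D$ has the same singular values as the diagonal matrix $D-\E D$.

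\textbf{Estimating the two factors.} Each diagonal entry of $D - \E D$ is a centered sum of $n-1$ Bernoulli$(p)$ variables, so Bernstein's inequality together with a union bound over the $n$ indices gives $\|D-\E D\|_{\mathrm{op}} \le C\sqrt{np\log n}$ with probability $1-o(1)$. For $\|\Delta^{-1}\|_{\mathrm{op}} = 1/\min_i|\mu_+^{(i)}-\mu_-^{(i)}| = 1/\min_i\sqrt{|\lambda_i^2 - 4\alpha|}$, the Perron eigenvalue $\lambda_1 = (1+o(1))np$ makes $|\lambda_1^2 - 4\alpha|\asymp n^2p^2$, much larger than needed. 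For the remaining eigenvalues I would invoke a spectral-edge bound of the form $\max_{i\ge 2}|\lambda_i| \le 2\sqrt{np(1-p)} + o(p^{3/2}\sqrt n)$ with probability $1-o(1)$, available in this dense regime via F\"uredi--Koml\'os / Bandeira--van Handel / Vu-type arguments for the centered adjacency matrix $A-p(J-I)$. Since $4\alpha - 4np(1-p) = 4np^2 - 4(p+1)\sim 4np^2$, expanding the square yields $\min_{i\ge 2}|\lambda_i^2 - 4\alpha| \ge 2np^2$, so $\|\Delta^{-1}\|_{\mathrm{op}}\le(2np^2)^{-1/2}$.

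\textbf{Assembly and main obstacle.} Combining gives $\|V^{-1} E V\|_{\mathrm{op}} \le C'\sqrt{\log n/p}$. Since the eigenvector matrix $V$ is unchanged when $H$ and $H_0$ are both divided by $\sqrt\alpha$, the refined Bauer--Fike bound applied to $H/\sqrt\alpha$ with perturbation $E/\sqrt\alpha$ produces
\[
\mathrm{dist}\!\bigl(\mu/\sqrt\alpha,\ \sigma(H_0/\sqrt\alpha)\bigr) \;\le\; \frac{C'}{\sqrt\alpha}\sqrt{\log n/p} \;=\; O\!\left(\sqrt{\log n/(np^2)}\right),
\]
and tracking the absolute constants through Bernstein and the edge bound delivers the explicit constant $40$ appearing in $R$. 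The delicate step is the spectral edge estimate: the gap $4\alpha - 4np(1-p)\sim np^2$ is all the room available, so any error in the extremal eigenvalues of $A$ larger than $o(p^{3/2}\sqrt n)$ destroys the argument. The hypothesis $p \ge \log^{2/3+\eps}n/n^{1/6}$ is precisely the threshold at which standard edge bounds (with error $O(n^{1/4}\log^{O(1)}n)$) are sharp enough; pushing the theorem below this range would require a substantially stronger edge theorem for the sparse Erd\H os--R\'enyi adjacency matrix.
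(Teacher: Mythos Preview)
Your argument is correct and reaches the same bound as the paper, but by a genuinely different route. The paper first conjugates $H/\sqrt\alpha$ by $\mathrm{diag}(I/\sqrt\alpha,\,I)$ to obtain $\tilde H_0=\begin{pmatrix}A/\sqrt\alpha & -I\\ I & 0\end{pmatrix}$, whose (normalized) eigenvector matrix $Y$ turns out to have a \emph{small} condition number $\kappa(Y)\le 2/\sqrt p$; it then applies the standard Bauer--Fike bound $\kappa(Y)\,\|\tilde E\|$ with $\|\tilde E\|\le 20\sqrt{\log n/(np)}$, giving $R=40\sqrt{\log n/(np^2)}$ directly. You instead keep the unconjugated $H_0$, for which $\kappa(V)$ is indeed polynomially large, but compensate by computing $V^{-1}EV$ explicitly: the single nonzero block of $E$ collapses the product to $\bigl(\begin{smallmatrix}-1&-1\\ 1&1\end{smallmatrix}\bigr)\otimes(\Delta^{-1}\wh D)$, and the refined Bauer--Fike inequality $\mathrm{dist}(\mu,\sigma(H_0))\le\|V^{-1}EV\|$ yields $2\|\Delta^{-1}\|\,\|D-\E D\|$. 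The two computations are in fact the same estimate in different coordinates: your $\min_{i\ge2}(4\alpha-\lambda_i(A)^2)=4\alpha(1-\lambda_i/2)(1+\lambda_i/2)$ in the paper's scaled variables, and $1-\max_{i\ge2}|\lambda_i|/2$ is precisely the quantity driving $\kappa(Y)$. Both therefore hinge on the same spectral-edge input $\max_{i\ge2}|\lambda_i(A)|\le 2\sqrt{np(1-p)}+O(n^{1/4}\log n)$, and the threshold $p\ge n^{-1/6}\log^{2/3+\eps}n$ arises for the identical reason. What your approach buys is that no preliminary conjugation has to be discovered---the structure of $E$ does the work---and your bound is even marginally sharper (by a harmless factor $1+\max_{i\ge2}|\lambda_i|/2\le 2$); what the paper's approach buys is modularity, since once the well-conditioned diagonalization of $\tilde H_0$ is in hand one only needs the black-box Bauer--Fike theorem and the scalar $\|\tilde E\|$.
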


 In the upcoming Section \ref{sec:ESD}, it will be demonstrated that eigenvalues in the bulk of the distributions for $\frac1 \newbeta H$ and for $\frac1 \newbeta H_0$ have absolute value 1. Since $p\gg \frac{\log^{2/3} n}{n^{1/6}}$, we have $R=40\sqrt{\frac{\log n}{np^2}}=o(1)$, and consequently, Theorem 1.6 provides informative results.  We would like to mention that the above result has been improved to hold for $p \gg \log n/n$ and that each eigenvalue of of $\frac1 \newbeta H$ is within distance $
O((\frac{\log n}{np})^{1/4})$ of an eigenvalue of $\frac1 \newbeta H_0$, using a variant of Bauer-Fike perturbation theorem that appeared later in \cite[Corollary 2.4]{CZ21}, as opposed to invoking the classical Bauer-Fike theorem in this paper (see Theorem \ref{thm:bauer-fike}).  

\begin{figure}
 \centering\includegraphics[width=12cm]{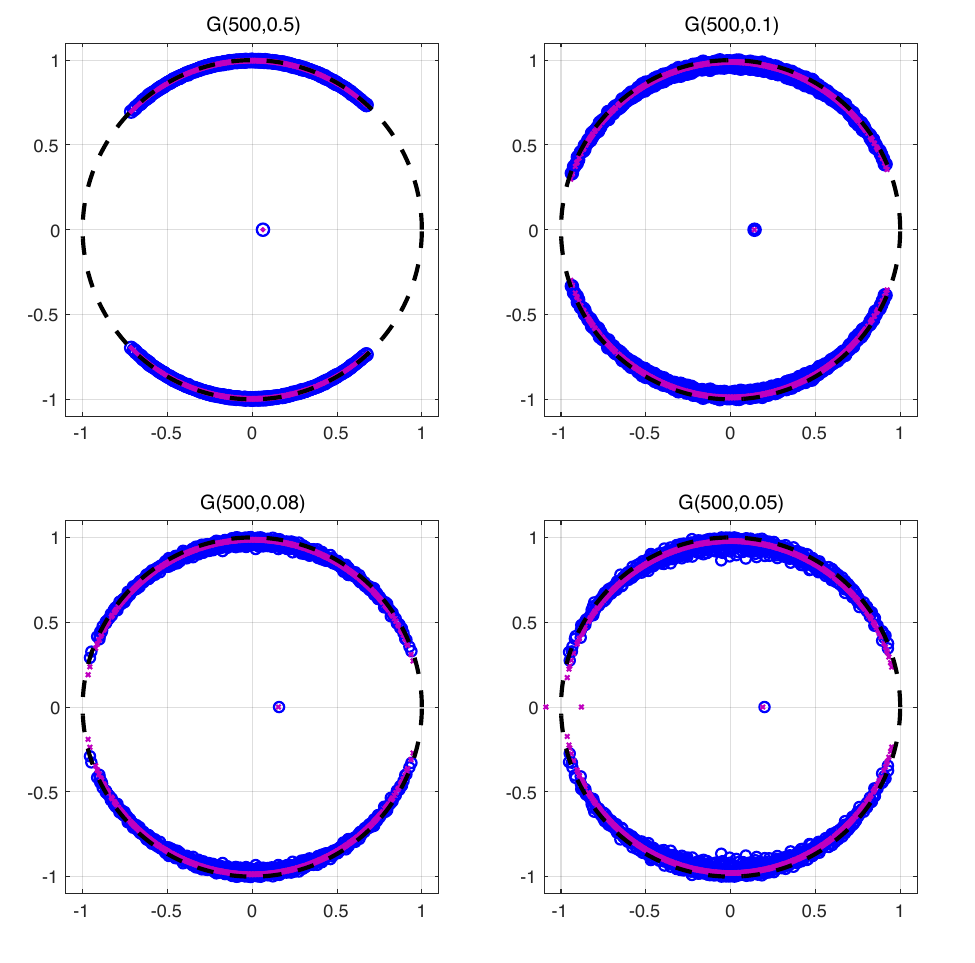}
 \caption{{\small The eigenvalues of $H/\sqrt{\alpha}$ defined in \eqref{e:def_H} and $H_0/\sqrt{\alpha}$ defined in \eqref{e:def_H_0} for a sample of $G(n,p)$ with $n=500$ and different values of $p$. The blue circles are the eigenvalues of $H/\sqrt{\alpha}$ and the red x's are for $H_0/\sqrt{\alpha}$. For comparison, the black dashed line is the unit circle. For the figures from top to bottom and from left to right, the values of $p$ are taken to be $p=0.5, p=0.1, p=0.08$ and $p=0.05$ respectively.}
}
 \label{fig:compare}
\end{figure}

\subsection{Outline}
We will describe the ESD of the partly averaged matrix $H_0$ to prove Proposition~\ref{t:H0spectrum} in Section~\ref{sec:ESD}. In Section~\ref{s:bulk}, we will show that the ESDs of $H$ and $H_0$ approach each other as $n$ goes to infinity by using the replacement principle \cite[Theorem~2.1]{TaoVu2010} and in Section \ref{sec:perturbation} we will use the Bauer-Fike theorem to prove Theorem~\ref{thm:main-dense}, showing that the partly averaged matrix $H_0$ has eigenvalues close to those of $H$ in the limit as $n\to \infty$.

\subsection{Background definitions}\label{ss:defs}
We give a few definitions to make clear the convergence described in Theorem~\ref{t:bulkESD-intro}  between empirical spectral distribution measures of $H$ and $H_0$.  For an $n\times n$  matrix $M_n$ with  eigenvalues $\lambda_1, \dots, \lambda_n$, the empirical spectral measure $\mu_{M_n}$ of $M_n$ is defined to be
$$\mu_{M_n} =\frac1n \sum_{i=1}^n \delta_{\lambda_i},$$
where $\delta_x$ is the Dirac delta function with mass 1 at $x$.   Note that $\mu_{M_n}$ is a probability measure on the complex numbers $\mathbb C$. 
The empirical spectral distribution (ESD) for $M_n$ is defined to be 
$$F^{M_n}(x,y)=\frac 1 n \#\left\{\lambda_i : \Re(\lambda_i) \le x \mbox{  and } \Im(\lambda_i) \le y  \right\}.$$	
 For $T$ a topological space (for example $\mathbb R$ or $\mathbb C$) and $\mathcal B$ its Borel $\sigma$-field, we can define convergence of a sequence $(\mu_n)_{n \ge 1}$ of random probability measures on $(T,\mathcal B)$ to a nonrandom probability measure $\mu$ also on $(T,\mathcal B)$ as follows.  We say that \emph{$\mu_n$ converges weakly to $\mu$ in probability as $n \to \infty$} (written $\mu_n \to \mu$ in probability) if for all bounded continuous functions $f: T \to \mathbb R$ and all $\epsilon >0$ we have
$$\Pr\left(\abs{\int_T f \, d\mu_n - \int_T f\, d\mu} > \epsilon\right) \to 0 \mbox{ as } n \to \infty.$$ 

Also, we say that \emph{$\mu_n$ converges weakly to $\mu$ almost surely as $n \to \infty$} (written $\mu_n \to \mu$ a.s.) if for all bounded continuous functions $f:T \to \mathbb R$, we have that $\abs{\int_T f \, d\mu_n - \int_T f\, d\mu} \to 0$ almost surely as $n \to \infty$.

 We will use $\|A\|_F := \tr( AA^*)^{1/2}$ to denote the Frobenius norm or Hilbert-Schmidt norm, and $\opnorm{A}$ to denote the operator norm. We denote $\|A\|_{\max}=\max_{ij}|a_{ij}|$.  We use the notation $o(1)$ to denote a small quantity that tends to zero as $n$ goes to infinity. We use the asymptotic notation $f(n)\ll g(n)$ if $f(n)/g(n)=o(1)$ and we write $f(n)= o(g(n))$ ; furthermore,  we write $f(n)=O(g(n))$ if $f(n)\le C g(n)$ for a constant $C>0$ when $n$ is sufficiently large.   Finally,  we will use $I$ or $I_n$ to denote the identity matrix,  where the subscript $n$ will be omitted when the dimension can be inferred by context.

\section{The spectrum of $H_0$}\label{sec:ESD}


We are interested in the limiting ESD of $H$ when $H$ is scaled to have bounded support (except for one outlier eigenvalue), and so we will work with the following rescaled conjugation of $H$, which has the same eigenvalues as $H/\sqrt{\alpha}$.
\begin{align*}
\widetilde{H}:=\frac{1}{\newbeta}\left(
\begin{array}{cccc}
\frac{1}{\newbeta}I   & 0\\
  0&I
\end{array}
\right) \left(
\begin{array}{cccc}
A   &I-D \\
I    &0
\end{array}
\right)   \left(
\begin{array}{cccc}
\newbeta I   & 0\\
 0 &I
\end{array}
\right)= \left(
\begin{array}{cccc}
\frac{1}{\newbeta}A  &  \frac1{\alpha}(I-D) \\
I   &0
\end{array}
\right).
\end{align*}

Note that the diagonal matrix $\frac1{\alpha}(I-D)$ is equal to $-I$ in expectation, and so we will compare the eigenvalues of $\tH$ to those of the partly averaged matrix $\tH_0$, noting that $\tH = \tH_0 + E$, where

\begin{equation}\label{e:def tH_0}
\tH_0 := \left(
\begin{matrix}
\frac1 \newbeta A & -I \\
I & 0
\end{matrix}
 \right)\qquad  \mbox{ and } \qquad
E:=
\left(
\begin{matrix}
0 & I+\frac1{\alpha}(I-D) \\
0 & 0
\end{matrix}
 \right).
 \end{equation}
Note that $H_0/\sqrt{\alpha}$ and $\tH_0$ also have identical eigenvalues. 

\newcommand\diag{\operatorname{diag}}

We will show that $\tH_0$ is explicitly diagonalizable in terms of the eigenvectors and eigenvalues of $\frac1 \newbeta A$, and then use this information to find an explicit form for the characteristic polynomial for $\tH_0$.

\subsection{Spectrum of $\tH_0$: Proof of Proposition \ref{t:H0spectrum}}\label{sec:H0}

Since $\frac 1 \newbeta A$ is a real symmetric matrix, it has a set 
$v_1,\ldots, v_n$ of orthonormal eigenvectors with corresponding real eigenvalues $\lambda_1 \ge \lambda_2 \ge \dots \ge \lambda_n$.  Thus we may write $\frac{A}{\sqrt \alpha} = U^{T} \diag(\lambda_1,\dots,\lambda_n) U$ where $U$ is an orthogonal matrix.  Consider the matrix $xI - \tH_0$, and note that 
\begin{align*}
 \begin{pmatrix}
I & 0\\
-xI & I
\end{pmatrix} (xI - \tH_0) =  \begin{pmatrix}
xI-\frac{1}{\sqrt \alpha} A & I\\
-x\big(xI-\frac{1}{\sqrt \alpha} A\big) -I & 0
\end{pmatrix},
\end{align*}
we see that $\det(xI-\tH_0) = \det(I + x(xI - \frac 1 \newbeta A) ) = \det(x^2 I - \frac x \newbeta A + I)$.  Conjugating to diagonalize $A$, we see that 
\begin{equation}\label{e:charpoly}
\det(xI-\tH_0) = \det(x^2 I -  x\diag(\lambda_1,\dots,\lambda_n) + I)
= \prod_{i=1}^n (x^2 -  \lambda_i x + 1).
\end{equation}
With the characteristic polynomial for $\tH_0$ factored into quadratics as in \eqref{e:charpoly}, we see that for each $\lambda_i$ of $\frac 1 \newbeta A$, there are two eigenvalues $\mu_{2i-1}$ and $\mu_{2i}$ for $\tH_0$ which are the two solutions to $x^2-\lambda_i x  +1=0$; thus,
\begin{equation}\label{e:evals}
\mu_{2i-1} = \frac{\lambda_i + \sqrt{ \lambda_i^2 - 4}}{2} \qquad \mbox{ and } \qquad 
\mu_{2i} = \frac{\lambda_i - \sqrt{ \lambda_i^2 - 4}}{2}.
\end{equation}

The eigenvalues of $A$ are well-understood. We use the following results that exist in literature.
\begin{thm}[\cite{KS03,LS18}]\label{thm:old}
Let $A$ be the adjacency matrix for an Erd\H os-R\'{e}nyi random graph $G(n,p)$. Assume $0< p\le p_0<1$ for a constant $p_0$ and $p\ge n^{-1+\phi}$ for a small constant $\phi>0$. Then for any $\epsilon>0$, the following holds with probability $1-o(1)$:
\begin{align*}
\lambda_1(A) = np(1+o(1));
\end{align*}
\begin{align*}
\max_{2\le i \le n}|\lambda_i(A)+p| \le L\sqrt{np(1-p)}+n^{\epsilon} \sqrt{np}\left(\frac{1}{(np)^2} + \frac{1}{n^{2/3}} \right),
\end{align*}
where $L=2+ \frac{s^{(4)}}{np} + O(\frac{1}{(np)^2})$ and $s^{(4)} =n^2p \left[ \frac{p^3 + (1-p)^3}{n^2 p(1-p)}-\frac{3}{n^2}\right].$
\end{thm}
\begin{proof}
We collect relevant results regarding the eigenvalues of $A$ from different works in the literature. In \cite{KS03}, it is shown that with probability $1-o(1)$, $\lambda_1(A) = (1+o(1))\max\{np, \sqrt{\Delta} \}$ where $\Delta$ is the maximum degree. As long as $np/\log n \to \infty$, $\max\{np, \sqrt{\Delta} \} = np$ (for the bounds on $\Delta$ see, for instance,  the proof of Lemma \ref{lem:Cond2} below). 

The operator norm of $A-\E A$ and the extreme eigenvalues of $A$ have been studied in various works (see \cite{FK1981,Vu07,BBK17,EKYY, LS18, HLY20, HK21}). In particular, in \cite[Theorem 2.9]{LS18}, assuming $p\ge n^{-1 + \phi}$, the authors proved that for any $\epsilon>0$ and $C>0$, the following estimate holds with probability at least $1-n^{-C}$: 
$$\left| \frac{1}{\sqrt{np(1-p)}} \| A - \E A \| -L \right| \le n^{\epsilon} \left(\frac{1}{(np)^2} + \frac{1}{n^{2/3}} \right)$$ with $L=2+ \frac{s^{(4)}}{np} + O(\frac{1}{(np)^2})$ and $s^{(4)} =n^2p \left[ \frac{p^3 + (1-p)^3}{n^2 p(1-p)}-\frac{3}{n^2}\right] = 1+ O(p).$ The conclusion of the theorem follows immediately from the classical Weyl's inequality that $\max_{2\le i \le n}|\lambda_i(A)+p|=\max_{2\le i \le n}|\lambda_i(A)-\lambda_i(\E A)|\le\|A-\E A\|$.
\end{proof}

Now we are ready to derive Proposition \ref{t:H0spectrum}. 
\begin{proof}[Proof of Proposition \ref{t:H0spectrum}]
Note that $\lambda_i=\lambda_i(A)/\sqrt{\alpha}$ and $\alpha=(n-1)p-1$. We have that 
\begin{align}\label{eq:eigNA}
\lambda_1=\sqrt{np}(1+o(1)) \quad \text{and}\quad \max_{2\le i \le n}|\lambda_i| \le 2\sqrt{1-p}(1+o(1))
\end{align}
with probability $1-o(1)$ by Theorem \ref{thm:old}. Therefore, for $\lambda_1$, we see from \eqref{e:evals} that $\mu_1, \mu_2$ are  real eigenvalues and 
\begin{align*}
\mu_1=\sqrt{np}(1+o(1)) \quad \text{and} \quad \mu_2=\frac{1}{\sqrt{np}}(1+o(1))
\end{align*}
with probability $1-o(1)$. 
Next, by Theorem \ref{thm:old}, it holds with probability $1-o(1)$  for any $2\le i \le n$ that
\begin{align*}
\lambda_i^2=\frac{\lambda_i^2(A)}{\alpha} \le \frac{1}{\alpha}\left[ L\sqrt{np(1-p)}+p+n^{\epsilon} \sqrt{np}\left(\frac{1}{(np)^2} + \frac{1}{n^{2/3}} \right)\right]^2.
\end{align*}
Since $p \ge C/\sqrt{n}$ for a sufficiently large constant $C$, we have
\begin{align*}
\lambda_i^2 &\le \frac{1}{\alpha} \left[ 2\sqrt{np(1-p)}+ O(\max\{p,(np)^{-1/2}, p^{1/2} n^{-1/6+\epsilon}\}) \right]^2\\
&=\frac{4 np (1-p) + O(\max\{p \sqrt{np},1, pn^{1/3 + \epsilon}\})}{np-(p+1)}\\
&=\frac{4 (1-p) + O(\max\{\sqrt{p/n},(np)^{-1}, n^{-2/3 + \epsilon}\})}{1-\frac{p+1}{np}} \\
&= 4(1-p)+O\left(\max\{\sqrt{p/n},(np)^{-1}, n^{-2/3 + \epsilon}\}\right) + O\left(\frac{1}{np}\right)\le 4-3p,
\end{align*}
for all sufficiently large $n$.
Hence, for all $i\ge 2$, we have $\lambda_i^2 <4$ and thus $\mu_{2i-1}, \mu_{2i}$ are complex eigenvalues with magnitude $1$ (since $|\mu_{2i-1}|=|\mu_{2i}|=1$). One should also note that $\mu_{2i-1}\mu_{2i} = 1$ for every $i$, and that whenever $\mu_{2i-1}$ is complex (i.e., $i\ge 2$), its complex conjugate is $\overline \mu_{2i-1}=\mu_{2i}$. 

Furthermore, note that $\Re \mu_{2i-1}=\Re \mu_{2i} =\lambda_i/2 = \lambda_i(A)/2\sqrt{\alpha}$. It is known that the empirical spectral measure of $A/\sqrt{np(1-p)}$ converges to the semicircular law supported on $[-2,2]$ assuming $np\to\infty$ (see for instance \cite{KP93} or \cite{TVW13}). We have the ESD of the scaled real parts of $\mu_j$
$$\frac{1}{2n}\sum_{j=1}^{2n} \delta_{\frac{2\Re\mu_{j}}{\sqrt{1-p}}} \to \mu_{sc}$$weakly almost surely where $\mu_{sc}$ is the semicircular law supported on $[-2,2]$. The proof of Proposition \ref{t:H0spectrum} is now complete. 
\end{proof}

\subsection{Real eigenvalues of $\tH_0$ when $p\le n^{-1/2}$}\label{sec:smallp} As mentioned in Remark \ref{rem:more}, when $p$ becomes smaller than $n^{-1/2}$, more real eigenvalues of $\tH_0$ will emerge. We can identify some of these eigenvalues, using recent results of \cite{EKYY, LS18, HLY20, HK21} in the study of the extreme eigenvalues of $A$.  For instance, in \cite[Corollary 2.13]{LS18}, assume $n^{2\phi-1} \le p \le n^{-2\phi'}$ for $\phi>1/6$ and $\phi'>0$. Then 
\begin{align}\label{eq:tw}
\lim_{n\to \infty}\Pr\left( n^{2/3} \Big(\frac{1}{\sqrt{np(1-p)}}\lambda_2(A) - \mathcal L -a \Big)\le s\right) = F_1^{TW}(s),
\end{align} where $\mathcal L= 2+\frac{1}{np} + O(\frac{1}{n^2p^2})$, $a=\sqrt{\frac{p}{n(1-p)}}$ and $F_1^{TW}(s)$ is the Tracy-Widom distribution function. Therefore, when $p\ge n^{-2/3+\epsilon}$, by noting that $F_1^{TW}(s) \to 1$ as $s\to \infty$ and selecting $s$ to be a large constant in \eqref{eq:tw}, we see that 
$$\lambda_2(A) = 2\sqrt{np(1-p)} + p + \sqrt{\frac{1-p}{np}} + O\left(\frac{\sqrt{np}}{n^{2/3}}\right).$$ Note that if $p<\frac{1-p}{n^{1/3}}$, then $p<\sqrt{\frac{1-p}{np}}$ and thus for $ n^{-2/3+\epsilon}\le p \le n^{-1/2}\ll n^{-1/3}$, 
\begin{align*}
\lambda_2^2 -4 &= \left(\frac{\lambda_2(A)}{\sqrt \alpha} \right)^2-4 = \frac{\left(2\sqrt{np(1-p)} + \sqrt{\frac{1-p}{np}} +p+ O\big(\frac{\sqrt{np}}{n^{2/3}} \big) \right)^2}{np-(p+1)}-4\\
&= \frac{\left(2\sqrt{np(1-p)} + \sqrt{\frac{1-p}{np}} +p \right)^2+O\left(\frac{np}{n^{2/3}} \right)}{np-(p+1)}-4\\
&= \frac{4np(1-p) + 4(1-p) + 4p\sqrt{np(1-p)}+O(\frac{np}{n^{2/3}})}{np-(p+1)}-4\\
&=\frac{4(1-p) +\frac{4(1-p)}{np} + 4p \sqrt{\frac{1-p}{np}}+O\left(\frac{1}{n^{2/3}} \right)}{1-\frac{p+1}{np}}-4\\
&=-4p + 4p \sqrt{\frac{1-p}{np}}+ \frac{4(1-p)(2+p)}{np} +O(n^{-2/3})>0.
\end{align*}
Hence, from \eqref{e:evals}, both $\mu_3$ and $\mu_4$ are real. The convergence result \eqref{eq:tw} holds for finitely many extreme eigenvalues of $A$ and thus they also generate real eigenvalues for $\widetilde H_0$.

The fluctuation of the extreme eigenvalues of $A$ has been obtained in \cite[Corollary 1.5]{HLY20} for $n^{-7/9} \ll p \ll n^{-2/3}$  and in \cite{HK21} for the remaining range of $p$ up to $p\ge n^{-1+\epsilon}$. One could use similar discussion as above to extract information about the real eigenvalues of $\widetilde H_0$. The details are omitted.

\subsection{$\tH_0$ is diagonalizable}

We can now demonstrate an explicit diagonalization for $\tH_0$.  Since $\mu_{2i-1}$ and $\mu_{2i}$ are solutions to $\mu^2-\mu\lambda_i +1=0$, one can check that the following vectors 
\begin{align}\label{e:lefteig}
y_{2i-1}^*=\frac{1}{\sqrt{1+|\mu_{2i-1}|^2}}\begin{pmatrix}
-\mu_{2i-1} v_i^T & v_i^T\\
\end{pmatrix} \quad\mbox{ and }\quad
y_{2i}^*=\frac{1}{\sqrt{1+|\mu_{2i}|^2}}\begin{pmatrix}
-\mu_{2i}v_i^T & v_i^T
\end{pmatrix}
\end{align}
satisfy $y_{2i-1}^* \tH_0 = \mu_{2i-1} y_{2i-1}^*$ and $y_{2i}^* \tH_0 = \mu_{2i} y_{2i}^*$ for all $i$.  Furthermore, $y_{2i-1}$ and $y_{2i}$ are unit vectors. 
For $1\le i\le n$, define the vectors 
\begin{align}\label{e:righteig}
x_{2i-1}=\frac{\sqrt{1+|\mu_{2i-1}|^2}}{\mu_{2i}-\mu_{2i-1}}\begin{pmatrix}
v_i \\
\mu_{2i} v_i
\end{pmatrix} \quad\text{ and }\quad
x_{2i}=\frac{\sqrt{1+|\mu_{2i}|^2}}{\mu_{2i-1}-\mu_{2i}}\begin{pmatrix}
v_i \\
\mu_{2i-1}v_i
\end{pmatrix}.
\end{align}
Defining
\begin{align*}
Y=\begin{pmatrix}
y_1^*\\
y_2^*\\
\vdots\\
y_{2n}^*
\end{pmatrix}
\quad \mbox{ and }\quad
X=\begin{pmatrix}
x_1, x_2, \ldots, x_{2n}
\end{pmatrix}
\end{align*}
we see that $X=Y^{-1}$ since $v_1,\ldots,v_n$ are orthonormal. Also it is easy to check that $Y\tH_0 X = \text{diag}(\mu_1,\ldots,\mu_{2n})$.

\section{The bulk distribution: proving Theorem~\ref{t:bulkESD-intro}}\label{s:bulk}

%

We begin by re-stating Theorem~\ref{t:bulkESD-intro} using the conjugated matrices defined in \eqref{e:def tH_0}.

\begin{thm}\label{t:bulkESD}
Let $A$ be the adjacency matrix for an Erd\H os-R\'{e}nyi random graph $G(n,p)$. Assume $0< p\le p_0<1$ for a constant $p_0$ and $np/\log n \to \infty$ with $n$. Let $\tH$ be the rescaled conjugation of the non-backtracking spectrum operator for $A$ defined in \eqref{e:def tH_0}, and let $\tH_0$ be its partial derandomization, also defined in \eqref{e:def tH_0}.  Then, $\mu_{\tH} - \mu_{\tH_0}$ converges almost surely (thus, also in probability) to zero as $n$ goes to infinity.
\end{thm}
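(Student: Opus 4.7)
The plan is to invoke Tao and Vu's replacement principle (Theorem~2.1 of \cite{TaoVu2010}) with the matrix sequences $\tH$ and $\tH_0$, which reduces $\mu_{\tH}-\mu_{\tH_0}\to 0$ almost surely to verifying two conditions: (i) $\frac{1}{(2n)^2}\bigl(\hsnorm{\tH}^2+\hsnorm{\tH_0}^2\bigr)$ is almost surely $O(1)$; and (ii) for Lebesgue-almost every $z\in\C$,
\[
\frac{1}{2n}\log\abs{\det(\tH-zI)}-\frac{1}{2n}\log\abs{\det(\tH_0-zI)}\to 0\quad\text{almost surely.}
\]
The first condition is routine: a block-by-block calculation gives $\hsnorm{\tH}^2=O(n)$ a.s.\ (using $\hsnorm{A}^2=2|E|=O(n^2p)$ a.s.\ by a Bernstein bound on the edge count, and noting each diagonal entry of $\frac{1}{\alpha}(I-D)$ is $O(1)$ by concentration of $D_{ii}$), and similarly $\hsnorm{\tH_0}^2=O(n)$.

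The crux is condition (ii). A Schur complement expansion of the $(2n)\times(2n)$ block $\tH-zI$, whose lower-right $-zI$ block is invertible for $z\ne0$, collapses its determinant to an $n\times n$ one:
\[
\det(zI-\tH)=\det\!\bigl(M(z)+\Delta\bigr),\qquad \det(zI-\tH_0)=\det M(z),
\]
where $M(z):=z^{2}I-\frac{z}{\newbeta}A+I$ and $\Delta:=\frac{D-(n-1)pI}{\alpha}$ is a diagonal matrix of centered degree deviations (of order $1/\sqrt{np}$ per entry). Hence, whenever $M(z)$ is invertible,
\[
\frac{1}{2n}\log\abs{\det(\tH-zI)}-\frac{1}{2n}\log\abs{\det(\tH_0-zI)}=\frac{1}{2n}\log\abs{\det\!\bigl(I+M(z)^{-1}\Delta\bigr)}.
\]
Two estimates then suffice. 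First, $\opnorm{\Delta}=O\!\bigl(\sqrt{\log n/(np)}\bigr)=o(1)$ almost surely, via a Bernstein bound on $\abs{D_{ii}-(n-1)p}$, a union bound in $i$, and Borel--Cantelli (the assumption $np/\log n\to\infty$ is exactly what makes this $o(1)$). Second, for every $z$ outside the union of the unit circle, $\{0\}$, and any fixed bounded neighborhood of the two real outliers $\mu_1,\mu_2$ of $\tH_0$, the spectral factorization $z^{2}-z\lambda_i+1=(z-\mu_{2i-1})(z-\mu_{2i})$ from \eqref{e:evals}, together with Theorem~\ref{t:H0spectrum}, forces $\opnorm{M(z)^{-1}}\le C(z)$ uniformly in $n$, since $\mu_{2i-1},\mu_{2i}$ lie on the unit circle for $i\ge 2$ while $\mu_1,\mu_2$ are real outliers whose distance to a fixed $z$ is bounded below for $n$ large. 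Because the excluded set is Lebesgue null in $\C$, combining these two estimates with the elementary bound $\abs{\log\abs{\det(I+K)}}\le n\log(1+\opnorm{K})$ (and its matching lower bound for $\opnorm{K}<1$) yields
\[
\frac{1}{2n}\abs[\bigg]{\log\abs{\det\!\bigl(I+M(z)^{-1}\Delta\bigr)}}\le \tfrac{1}{2}\log\bigl(1+\opnorm{M(z)^{-1}\Delta}\bigr)=o(1)
\]
for almost every $z$, almost surely.

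The main subtlety is conceptual rather than computational: $\tH$ and $\tH_0$ share the same random input $A$, so the usual application of the replacement principle between independent sequences does not apply directly, and one must check that this coupling does not propagate dangerously through the log-determinant comparison. The decisive point is that after the Schur reduction, the randomness splits cleanly into $M(z)$ (controlled deterministically once the spectrum of $A$ is pinned down by Theorem~\ref{thm:old}) and the diagonal $\Delta$ (controlled in operator norm by degree concentration), so the dependence between $M(z)$ and $\Delta$ is harmless for bounding $\opnorm{M(z)^{-1}\Delta}$; this is precisely the reason $\tH_0$ is the right partial derandomization. A secondary technical point is upgrading the high-probability spectral bounds of Theorem~\ref{thm:old} to the almost-sure bounds required by Theorem~2.1 of \cite{TaoVu2010}, which is achieved by the same Bernstein plus Borel--Cantelli route used for $\Delta$.
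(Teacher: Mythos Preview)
Your argument is correct in outline and reaches the same conclusion, but the route to the log-determinant comparison is genuinely different from the paper's. The paper packages the replacement principle into a perturbation corollary (Corollary~\ref{t:ReplCor}) and then works directly with the $2n\times 2n$ matrix $\tH_0-zI$: it computes the singular values of $\tH_0-zI$ explicitly in terms of the eigenvalues $\lambda_i$ of $A/\newbeta$ (Lemma~\ref{lem:Cond3}), and the technical heart is the deterministic Lemma~\ref{lem:sing_min}, proved by a somewhat lengthy case analysis in the appendix, showing that each such singular value is bounded below by a constant $C_z>0$ whenever $\Im z\ne 0$ and $|z|\ne 1$. Your Schur-complement reduction to the $n\times n$ matrix $M(z)=z^2I-\frac{z}{\newbeta}A+I$ sidesteps this: since $M(z)$ is a polynomial in the Hermitian matrix $A/\newbeta$, it is normal, so $\opnorm{M(z)^{-1}}=\max_i|z^2-z\lambda_i+1|^{-1}$, and the required lower bound on $|z^2-z\lambda+1|$ over real $\lambda$ is a short calculus exercise. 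This is cleaner and buys you the same conclusion without the appendix.

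Two small points. First, your invocation of Theorem~\ref{t:H0spectrum} and the worry about upgrading the spectral bounds of Theorem~\ref{thm:old} to almost-sure statements are unnecessary: the inequality $\inf_{\lambda\in\R}|z^2-z\lambda+1|>0$ for $\Im z\ne 0$, $|z|\ne 1$ is purely deterministic (write $z=a+ib$ and minimize $(a^2-b^2+1-a\lambda)^2+b^2(2a-\lambda)^2$ over $\lambda$), exactly parallel to the paper's observation that Lemma~\ref{lem:Cond3} ``does not use randomness.'' This also removes any concern about dependence between $M(z)$ and $\Delta$. Second, the normalization in your condition~(i) should be $\frac{1}{2n}$ rather than $\frac{1}{(2n)^2}$ to match Theorem~\ref{t:Repl}; your estimate $\hsnorm{\tH}^2=O(n)$ is correct and gives what is needed either way.
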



To prove Theorem~\ref{t:bulkESD}, we will show that the bulk distribution of $\tH$ matches that of $\tH_0$ using the replacement principle \cite[Theorem~2.1]{TaoVu2010}, which we rephrase slightly as a perturbation result below (see Theorem~\ref{t:Repl}).  First, we give a few definitions that we will use throughout this section.
We say that a random variable $X_n \in \mathbb C$ is \emph{bounded in probability} if
$$\lim_{C\to\infty} \liminf_{n \to \infty} \Pr(\abs{X_n} \le C) = 1$$
and we say that $X_n$ is \emph{almost surely bounded} if
$$ \Pr\left( \limsup_{n\to\infty} \abs{ X_n} < \infty\right) = 1.$$

\newcommand\mym{m}

\begin{thm}[Replacement principle \cite{TaoVu2010}]\label{t:Repl}
Suppose for each $\mym$ that $M_\mym$ and $M_\mym+P_\mym$ are random $m \times m$ matrices with entries in the complex numbers.  Assume that
\begin{equation}\label{ReplCond1}
\frac 1\mym \hsnorm{ M_\mym }^2+ \frac 1\mym \hsnorm{M_\mym+P_\mym}^2 \mbox{ is bounded in probability (resp., almost surely)}
\end{equation}
and that, for almost all complex numbers $z \in \mathbb C$,
\begin{equation}\label{ReplCond2}
\frac1\mym \log \abs{\rule{0pt}{10pt}\det \left( M_\mym+P_\mym - zI\right) } - \frac1\mym \log \abs{\rule{0pt}{10pt} \det\left(M_\mym -zI\right)}
\end{equation}
converges in probability (resp., almost surely) to zero; in particular, this second condition requires that for almost all $z \in \mathbb C$, the matrices 
$M_\mym+P_\mym - zI$ and $M_\mym - zI$ have non-zero determinant with probability $1-o(1)$ (resp., almost surely non-zero for all but finitely many $\mym$). 

Then $\mu_{M_\mym} - \mu_{M_\mym+P_\mym}$ converges in probability (resp., almost surely) to zero. 
\end{thm}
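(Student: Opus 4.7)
My plan is to prove the replacement principle via Girko's Hermitization, reducing it to an integrated form of hypothesis (ii). Weak convergence $\mu_{M_\mym}-\mu_{M_\mym+P_\mym}\to 0$ (in either sense) follows once I show $\int\varphi\,d(\mu_{M_\mym}-\mu_{M_\mym+P_\mym})\to 0$ for each $\varphi\in C_c^\infty(\C)$. Introducing the logarithmic potential
\[
U_\mym(z):=\frac{1}{\mym}\log\bigl|\det(M_\mym-zI)\bigr|=\int_{\C}\log|z-w|\,d\mu_{M_\mym}(w),
\]
the distributional identity $\Delta_z\log|z-w|=2\pi\delta_w$ together with two integrations by parts yields
\[
\int\varphi\,d(\mu_{M_\mym}-\mu_{M_\mym+P_\mym})=\frac{1}{2\pi\mym}\int_{K}\Delta\varphi(z)\bigl[\log|\det(M_\mym-zI)|-\log|\det(M_\mym+P_\mym-zI)|\bigr]dz,
\]
where $K:=\operatorname{supp}\Delta\varphi$ is compact. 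Hypothesis (ii) gives pointwise-a.e.\ convergence of the bracketed integrand, divided by $\mym$, to $0$; the task is to upgrade this to convergence of the $z$-integral.

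The upgrade is a uniform-integrability argument. Writing $\log|\det(M_\mym-zI)|=\sum_i\log|z-\lambda_i|$ for $\lambda_i$ the eigenvalues of $M_\mym$, I split into $\log^{\pm}$. For $z\in K$ one has $\log^+|z-\lambda_i|\leq \log(1+C_K+|\lambda_i|)$, hence by concavity of $\log(1+\cdot)$, Jensen, Cauchy--Schwarz and the Schur bound $\sum_i|\lambda_i|^2\leq\hsnorm{M_\mym}^2$,
\[
\frac{1}{\mym}\sum_i\log^+|z-\lambda_i|\leq \log\!\Bigl(1+C_K+\mym^{-1/2}\hsnorm{M_\mym}\Bigr),
\]
which is bounded in probability (resp.\ a.s.), uniformly in $z\in K$, by hypothesis (i). For the negative part, polar coordinates around $\lambda_i$ yield the deterministic universal bound $\int_K\log^-|z-\lambda|\,dz\leq C(K)$ independent of $\lambda\in\C$, so $\frac{1}{\mym}\int_K\sum_i\log^-|z-\lambda_i|\,dz\leq C(K)$ pathwise. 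The same estimates carried through with second powers produce a uniform $L^2(K)$ bound on $\frac{1}{\mym}\log|\det(M_\mym-zI)|$ and on its $P_\mym$-counterpart.

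On an event of asymptotically full probability (resp., outside a null set), the random differences $\frac{1}{\mym}[\log|\det(M_\mym-zI)|-\log|\det(M_\mym+P_\mym-zI)|]$ are therefore $L^2(K)$-bounded in $\mym$ and, by (ii), tend to $0$ pointwise a.e.\ on $K$; de la Vall\'ee-Poussin gives uniform integrability and Vitali's theorem upgrades the pointwise convergence to $L^1(K)$ convergence. Pairing with the bounded $\Delta\varphi$ then sends the integral in Girko's formula to $0$, as required. The ``in probability'' version is deduced from the ``almost sure'' one by the standard subsequence-of-subsequence extraction. The main obstacle is the classical small-singular-value issue: $\log|\det(M_\mym-zI)|$ can be very negative for $z$ close to an eigenvalue of $M_\mym$. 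This dissolves here without any additional probabilistic input, because we need only $z$-integrated control and $\log|z-\lambda|$ is locally $L^p$-integrable on compact sets with a bound that is uniform in $\lambda$; so (i) controls the positive tail while the polar-coordinate estimate controls the negative tail.
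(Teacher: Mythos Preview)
The paper does not prove this theorem; it is quoted verbatim from Tao--Vu \cite{TaoVu2010} as an external input and then applied (via Corollary~\ref{t:ReplCor}) to the matrices $\tH_0$ and $\tH_0+E$. Your plan reproduces the Girko Hermitization argument that Tao and Vu themselves use, and the core steps---the distributional identity $\mu_M=\frac{1}{2\pi}\Delta U_M$, the $\log^+$ control via Jensen and the Schur inequality $\sum_i|\lambda_i|^2\le\hsnorm{M}^2$, the uniform-in-$\lambda$ bound $\int_K(\log^-|z-\lambda|)^p\,dz\le C_p(K)$ from polar coordinates, and the Vitali upgrade from $L^2(K)$-boundedness plus a.e.\ convergence to $L^1(K)$ convergence---are all correct.

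One step deserves more care than you give it. The claim that the ``in probability'' version follows by a routine subsequence-of-subsequence extraction is too quick in two places. First, hypothesis~\eqref{ReplCond2} gives, for each $z$ in a full-measure set, convergence in probability; to extract a \emph{single} further subsequence along which convergence holds a.s.\ for almost every $z$ simultaneously you cannot proceed $z$-by-$z$ (the null sets depend on $z$). The standard fix is Fubini: from \eqref{ReplCond2} and dominated convergence one gets $\int_K\E\bigl[\,|g_m(z)|\wedge 1\,\bigr]\,dz\to 0$, hence $\int_K|g_m(z)|\wedge 1\,dz\to 0$ in $L^1(\Omega)$, and along a further subsequence this gives a.s.\ convergence of $g_m$ to $0$ in $z$-measure on $K$, which is what Vitali actually needs. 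Second, boundedness in probability of $\tfrac{1}{m}\hsnorm{M_m}^2$ does \emph{not} yield almost-sure boundedness along any subsequence (take i.i.d.\ standard normals), so you cannot literally feed the subsequence into the almost-sure version of the theorem. What does work is to fix $\epsilon>0$, use tightness to pick $C$ with $\Pr(\tfrac{1}{m}\hsnorm{M_m}^2+\tfrac{1}{m}\hsnorm{M_m+P_m}^2>C)<\epsilon$ for all $m$, and run the Vitali argument on the complementary event where the $L^2(K)$ bound is a fixed constant; since $\epsilon$ is arbitrary this yields convergence in probability of $\int_K\Delta\varphi\cdot g_m\,dz$. With these two adjustments your argument is complete and matches Tao--Vu's.
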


Note that there is no independence assumption anywhere in Theorem~\ref{t:Repl}; thus, entries in $P_\mym$ may depend on entries in $M_\mym$ and vice versa.

We will use the following corollary of Theorem~\ref{t:Repl}, which essentially says that if the perturbation $P_\mym$ has largest singular value of order less than the smallest singular value for $M_\mym-zI$ for almost every $z\in \mathbb C$, then adding the perturbation $P_\mym$ does not appreciably change the bulk distribution of $M_\mym$.

\begin{cor}\label{t:ReplCor}
For each $\mym$, let $M_\mym$ and $P_\mym$ be random $m\times m$ matrices with entries in the complex numbers, and let $f(z,\mym)\ge 1$ be a real function depending on $z$ and $\mym$.  Assume that
\begin{equation}\label{RCorCond1}
\frac1\mym \|M_\mym\|_F^2 + \frac1\mym\|M_\mym+P_\mym\|_F^2 \mbox{ is bounded in probability (resp., almost surely)},
\end{equation}
and 
\begin{equation}\label{RCorCond2}
f(z,\mym) \|P_m\| \mbox{ converges in probability (resp., almost surely) to zero},
\end{equation}
and, for almost every complex number $z \in \mathbb C$,
\begin{equation}\label{RCorCond3}
\opnorm{\left(M_\mym-zI\right)^{-1}} \le f(z,\mym),
\end{equation} 
with probability tending to 1 (resp., almost surely for all but finitely many $\mym$).

Then $\mu_{M_\mym} - \mu_{M_\mym+P_\mym}$ converges in probability (resp., almost surely) to zero. 
\end{cor}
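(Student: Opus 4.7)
The plan is to apply the replacement principle, Theorem~\ref{t:Repl}, directly. Hypothesis \eqref{ReplCond1} is exactly \eqref{RCorCond1}, so everything reduces to verifying the log-determinant condition \eqref{ReplCond2} for almost every $z \in \mathbb{C}$.

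I would fix such a $z$ and work on the event where $\opnorm{(M_m - zI)^{-1}} \leq f(z,m)$, which by \eqref{RCorCond3} occurs with probability tending to one (resp.\ almost surely for all but finitely many $m$). On this event, $M_m - zI$ is invertible, and the factorization
$$M_m + P_m - zI = (M_m - zI)\bigl(I + Q_m\bigr), \qquad Q_m := (M_m - zI)^{-1} P_m,$$
yields the multiplicative identity $\det(M_m + P_m - zI) = \det(M_m - zI)\cdot\det(I + Q_m)$. Submultiplicativity of the operator norm combined with \eqref{RCorCond2} gives
$$\opnorm{Q_m} \leq \opnorm{(M_m - zI)^{-1}} \cdot \opnorm{P_m} \leq f(z,m)\,\opnorm{P_m} \longrightarrow 0$$
in probability (resp.\ almost surely).

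Next I would invoke the elementary spectral-radius bound $|\lambda_i(Q_m)| \leq \opnorm{Q_m}$: once $\opnorm{Q_m} \leq 1/2$, every eigenvalue of $I + Q_m$ has modulus between $1-\opnorm{Q_m}$ and $1+\opnorm{Q_m}$, so
$$\bigl(1 - \opnorm{Q_m}\bigr)^m \leq |\det(I + Q_m)| = \prod_{i=1}^m |1 + \lambda_i(Q_m)| \leq \bigl(1 + \opnorm{Q_m}\bigr)^m.$$
Taking logarithms, using $|\log(1 \pm t)| \leq 2|t|$ for $|t| \leq 1/2$, and dividing by $m$ yields
$$\frac{1}{m}\bigl|\log|\det(I + Q_m)|\bigr| \leq 2\,\opnorm{Q_m} \longrightarrow 0,$$
which is precisely the content of \eqref{ReplCond2}. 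On the same event, $I + Q_m$ and hence $M_m + P_m - zI$ are invertible, so both determinants appearing in \eqref{ReplCond2} are nonzero. Applying Theorem~\ref{t:Repl} then delivers the conclusion.

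The argument is essentially algebraic, so I do not foresee a serious obstacle. The only point requiring mild care is the simultaneous handling of the quantifiers \emph{``for almost every $z$''} and \emph{``with probability tending to one''} that appear in \eqref{RCorCond3}, but these match the pointwise-in-$z$ quantifiers in Theorem~\ref{t:Repl} verbatim, so no additional measurability or uniformity work is needed.
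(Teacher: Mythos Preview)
Your argument is correct and complete; it reaches the same conclusion as the paper's proof but via a genuinely different decomposition. The paper works additively with singular values: it writes the singular values of $M_m+P_m-zI$ as $\sigma_i+s_i$, invokes Weyl's perturbation theorem to get $\max_i|s_i|\le\opnorm{P_m}$, and then uses $\abs{\det(\cdot)}=\prod_i\sigma_i$ to bound the log-determinant difference by $\sum_i\abs{\log(1+s_i/\sigma_i)}$. You instead work multiplicatively, factoring $M_m+P_m-zI=(M_m-zI)(I+Q_m)$ and controlling $\det(I+Q_m)$ through the spectral-radius inequality $|\lambda_i(Q_m)|\le\opnorm{Q_m}$. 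Your route is arguably more self-contained---it avoids citing Weyl's theorem and needs only the elementary bound $\rho(Q)\le\opnorm{Q}$---while the paper's singular-value approach makes the role of the smallest singular value $\sigma_m=1/\opnorm{(M_m-zI)^{-1}}$ more explicit. Either way the essential estimate $\frac1m|\log|\det|-\log|\det||\le 2f(z,m)\opnorm{P_m}$ comes out the same.
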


\begin{proof}
We will show that the three conditions \eqref{RCorCond1}, \eqref{RCorCond2}, and \eqref{RCorCond3} of Corollary~\ref{t:ReplCor} together imply the two conditions needed to apply Theorem~\ref{t:Repl}.  

First note that \eqref{RCorCond1} is identical to the first condition \eqref{ReplCond1} of Theorem~\ref{t:Repl}.  Next, we will show in the remainder of the proof that condition \eqref{ReplCond2} of Theorem~\ref{t:Repl} holds by noting that sufficiently small perturbations have a small effect on the singular values, and also the absolute value of the determinant is equal to the product of the singular values.

Let $z$ be a complex number for which \eqref{RCorCond3} holds, let $M_\mym-zI$ have singular values $\sigma_1 \ge \dots \ge \sigma_\mym$, and let $M_\mym+P_\mym-zI$ have singular values $\sigma_1+s_1 \ge \sigma_2+s_2 \ge \dots \ge \sigma_\mym+s_\mym$.  We will use the following result, which is sometimes called Weyl's perturbation theorem for singular values, to show that the $s_i$ are small.

\begin{lem}[{\cite[Theorem~1.3]{Cha09}}]
Let $A$ and $B$ be $m\times n$ real or complex matrices with singular values $\sigma_1(A)\ge \dots \ge \sigma_{\min\{m, n\}}(A) \ge 0$ and $\sigma_1(B)\ge \dots \ge \sigma_{\min\{m, n\}}(B) \ge 0$, respectively.  Then
$$ \max_{1\le j \le \min\{m, n\}} \abs{ \sigma_j(A) -\sigma_j(B) } \le \opnorm{A-B}.$$
\end{lem}
We then have that 
\begin{align*}
\max_{1\le i \le m}\abs{s_i} &\le \|P_m\|,
\end{align*}
and by \eqref{RCorCond3},
\begin{align*}
\max_{1\le i \le m}\frac{\abs{s_i}}{\sigma_i} \le f(z,m)\|P_m\|
\end{align*}
which converges to zero in probability (resp., almost surely) by \eqref{RCorCond2}. Thus we know that 
\begin{equation*}\label{log-error-bound}
\left| \log (1+s_i/\sigma_i )\right| \le 2\abs{ s_i/\sigma_i}\le 2f(z,m)\|P_m\|,
\end{equation*}
where the inequalities hold with probability tending to 1 (resp., almost surely for all sufficiently large $m$).
Using the fact that the absolute value of the determinant is the product of the singular values, we may write \eqref{ReplCond2} as
\begin{align*}
\left|\frac1\mym \left(\log \prod_{i=1}^\mym (\sigma_i+s_i)- \log\prod_{i=1}^m\sigma_i\right) \right|
&=
\frac1m \left| \sum_{i=1}^\mym \log\left( 1+\frac{s_i}{\sigma_i}\right)\right| \le 2f(z,m)\|P_m\|,
\end{align*} 
which converges to zero in probability (resp., almost surely) by \eqref{RCorCond2}.   Thus, we have shown that \eqref{ReplCond1} and \eqref{ReplCond2} hold, which completes the proof.
\end{proof}

\subsection{Proof of Theorem~\ref{t:bulkESD}}

The proof of Theorem~\ref{t:bulkESD} will follow from Corollary~\ref{t:ReplCor} combined with lemmas showing that the conditions \eqref{RCorCond1}, \eqref{RCorCond2}, and \eqref{RCorCond3} of Corollary~\ref{t:ReplCor} are satisfied.  Indeed, Lemma \ref{lem:Cond1} verifies \eqref{RCorCond1}, Lemma \ref{lem:Cond3} verifies  \eqref{RCorCond3} and \eqref{RCorCond2} follows by combining Lemma \ref{lem:Cond2} and Lemma \ref{lem:Cond3}. Note that the assumption $np/\log n \to \infty$ in Theorem~\ref{t:bulkESD} is only needed to prove conditions \eqref{RCorCond1} and \eqref{RCorCond2}.  Condition \eqref{RCorCond3} in fact follows for any $p$ and for more general matrices--see the proof of Lemma~\ref{lem:Cond3}.

In Corollary~\ref{t:ReplCor}, we will take $M_\mym$ to be the partly derandomized matrix $\tH_0$ and $P_\mym$ to be the matrix $E$ (see \eqref{e:def tH_0}), where we suppress the dependence of $\tH_0$ and $E$ on $n=\mym/2$ to simplify the notation.  There are two interesting features: first, the singular values of $\tH_0$ may be written out explicitly in terms of the eigenvalues of the Hermitian matrix $A$ (which are well understood; see Lemma~\ref{lem:Cond3}); and second, the matrix $E$ is completely determined by the matrix $\tH_0$, making this a novel application of the replacement principle (Theorem~\ref{t:Repl} and Corollary~\ref{t:ReplCor}) where the sequence of matrices $\tH_0 + E= \tH$ has some dependencies among the entries.

\begin{lem}\label{lem:Cond2}
Assume $0< p \le p_0<1$ for a constant $p_0$. Further assume $np/\log n\to \infty$.  For $E$ as defined in \eqref{e:def tH_0}, we have that $\|E\|\le 20 \sqrt{\frac{{\log n}}{np}}$ almost surely  for all but finitely many $n$. In particular, $\|E\|$ converges to zero almost surely for all but finitely many $n$.
\end{lem}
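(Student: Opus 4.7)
The plan is to observe that the operator norm of $E$ reduces to the maximum deviation of a diagonal entry of $D$ from its mean, and then to apply Bernstein's inequality coupled with a union bound and Borel--Cantelli.

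First, I would note that $E$ is strictly block upper triangular with only one nonzero block, namely the diagonal matrix $I + \tfrac{1}{\alpha}(I-D)$. Since multiplying by the nilpotent off-diagonal block structure does not change the largest singular value of the nonzero block, we have
\begin{equation*}
\|E\| \;=\; \opnorm{I + \tfrac{1}{\alpha}(I-D)} \;=\; \frac{1}{\alpha}\max_{1\le i\le n}\bigl|\,(n-1)p - D_{ii}\,\bigr|,
\end{equation*}
using that $\alpha+1 = (n-1)p$ and that the matrix in question is diagonal. So the task reduces to controlling $\max_i |D_{ii}-(n-1)p|$, where each $D_{ii}$ is a sum of $n-1$ i.i.d.\ Bernoulli$(p)$ variables (loops are excluded).

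Next, I would apply Bernstein's inequality to a single diagonal entry: for $t>0$,
\begin{equation*}
\Pr\bigl(|D_{ii}-(n-1)p| > t\bigr) \;\le\; 2\exp\!\left(-\frac{t^2/2}{(n-1)p(1-p) + t/3}\right).
\end{equation*}
Choosing $t = 10\sqrt{np\log n}$, the hypothesis $np/\log n\to\infty$ guarantees that the quadratic term in the denominator dominates for all large $n$, so the tail is bounded by $2 n^{-c}$ for some large constant $c$ (a short calculation gives $c \ge 20$, say). A union bound over the $n$ indices then gives
\begin{equation*}
\Pr\!\left(\max_{i}|D_{ii}-(n-1)p| > 10\sqrt{np\log n}\right) \;\le\; 2 n^{1-c},
\end{equation*}
which is summable in $n$. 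By the Borel--Cantelli lemma, the event on the left occurs for only finitely many $n$ almost surely.

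Finally, I would divide by $\alpha = (n-1)p - 1$. Since $np\to\infty$ we have $\alpha \ge np/2$ for all sufficiently large $n$, and therefore almost surely
\begin{equation*}
\|E\| \;\le\; \frac{10\sqrt{np\log n}}{\alpha} \;\le\; 20\sqrt{\frac{\log n}{np}}
\end{equation*}
for all but finitely many $n$, which is the desired bound. The statement $\|E\|\to 0$ a.s.\ then follows from $np/\log n \to \infty$. There is no substantive obstacle here; the only delicate point is checking that $np/\log n \to \infty$ is strong enough to place Bernstein's bound in the sub-Gaussian regime with a decay rate summable after the union bound, which it comfortably is.
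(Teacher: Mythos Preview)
Your argument is correct. Both you and the paper make the same first reduction, observing that $\|E\| = \tfrac{1}{\alpha}\max_i |D_{ii}-(n-1)p|$, and both finish with Borel--Cantelli. The difference lies in how the tail of the maximum degree deviation is controlled. The paper passes to the order statistics $d_{(1)}$ and $d_{(n)}$, introduces the counting variable $X_K = \#\{i: d_i \ge K\}$, bounds $\Pr(d_{(1)}\ge K)$ by $\E X_K^2 = \E X_K + 2\E\binom{X_K}{2}$, and then estimates each expectation using the Chernoff bound in its relative-entropy form; this yields a summable probability of order $n^{-9}$. Your route is more direct: Bernstein's inequality on a single $D_{ii}$, then a union bound over the $n$ coordinates. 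Because the assumption $np/\log n \to \infty$ places $t = 10\sqrt{np\log n}$ squarely in the sub-Gaussian regime of Bernstein (the linear term $t/3$ is $o(np)$), you get a tail of order $n^{-c}$ with $c$ large, and the union bound costs only one power of $n$. Your approach is shorter and avoids the second-moment machinery; the paper's approach, while heavier here, is closer in spirit to arguments one would use for finer control of several order statistics simultaneously, though that extra strength is not needed for the present lemma.
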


\begin{proof}
First, note that $\E D= (n-1)p I = (\alpha+1)I$ and thus $$E:=
\left(
\begin{matrix}
0 & I+\frac1{\alpha}(I-D) \\
0 & 0
\end{matrix}
 \right)= \left(
\begin{matrix}
0 & \frac1{\alpha}(\E D-D) \\
0 & 0
\end{matrix}
 \right).$$
Since $\E D-D$ is a diagonal matrix, it is easy to check that 
$$\|E\|=\|E\|_{\max} = \frac1{\alpha} \|D-\E D\|_{\max}=\frac1{\alpha}\max_{1\le i \le n} |D_{ii}- \E D_{ii}|=\frac1{\alpha}\max_{1\le i \le n} |D_{ii}- (n-1)p|.$$
Note that $D_{ii}$'s have the same distribution. By the union bound, it follows that for any $s>0$,
$$\Pr(\|E\| \ge s) \le n \Pr\left(\frac1{\alpha} |D_{11}- (n-1)p| \ge s \right)=n \Pr\left(\frac1{\alpha} \Big|\sum_{j=2}^n a_{1j}- (n-1)p \Big| \ge s \right).$$ 

Next we will apply the following general form of Chernoff bound. 
\begin{thm}[Chernoff bound \cite{Chernoff52}] Assume $\xi_1,\ldots,\xi_n$ are iid random variables and $\xi_i\in [0,1]$ for all $i$. Let $p=\E \xi_i$ and $S_n = \sum_{i=1}^n \xi_i$, then for any $\eps>0$,
\begin{align*}
&\Pr(S_n - np \ge n\eps) \le \exp\left(-\mbox{RE}(p+\eps || p) n \right);\\
&\Pr(S_n - np \le -n\eps) \le \exp\left(-\mbox{RE}(p-\eps || p) n \right)
\end{align*}
where $\mbox{RE}(p||q)= p\log(\frac{p}{q}) + (1-p) \log(\frac{1-p}{1-q})$ is the relative entropy or Kullback-Leibler divergence. 
\end{thm}
By our assumption, $np=\omega(n)\log n$ where $\omega(n)$ is a positive function that tends to infinity with $n$. Now take $K=(n-1)p+npt$ where $t=t(n)=10 \sqrt{\frac{\log n}{np}}$ (say).  Our assumption $np/\log n\to \infty$ implies $t\to 0$ with $n$. Thus
\begin{align*}
\Pr \left(\sum_{j=2}^n a_{1j} \ge K \right) =\Pr \left(\sum_{j=2}^n a_{1j} -(n-1)p \ge npt \right) \le \exp(-\mbox{RE}(p+pt|| p) n)
\end{align*}
where 
\begin{align*}
\mbox{RE}\left(p+pt ||p \right)&= p(1+t)\log(1+t) + (1-p-pt)\log\left(\frac{1-p-pt}{1-p} \right)\\
&=p(1+t)\log(1+t) - (1-p-pt )\log\left(1+\frac{pt}{1-p-pt} \right)\\
&> p(1+t)(t-t^2/2) - pt =pt^2(1-t)/2
\end{align*}
by the elementary inequalities $x-x^2/2<\log(1+x)< x$ for $x>0$.

Therefore, for $n$ sufficiently large, taking $t=5 \sqrt{\frac{\log n}{np}}$, we get
$$\Pr \left(\sum_{j=2}^n a_{1j} \ge (n-1)p+npt \right) \le \exp \left(-\frac{np t^2(1-t)}{2} \right)\le \exp(-10\log n)= n^{-10}.$$

Similarly, take $L=(n-1)p-npt$ where $t=5 \sqrt{\frac{\log n}{np}}$. Applying the Chernoff bound yields that
\begin{align*}
\Pr \left(\sum_{j=2}^n a_{1j} \le L \right) =\Pr \left(\sum_{j=1}^n a_{1j} -(n-1)p \le -npt \right) \le \exp\left(-\mbox{RE}(p-pt|| p) n \right).
\end{align*}
We take $n$ sufficiently large such that $t=t(n)<0.01$ (say). Then 
\begin{align*}
\mbox{RE}(p-pt ||p)&= p(1-t)\log(1-t) + (1-p+pt)\log\left(\frac{1-p+pt}{1-p} \right)\\
&=p(1-t)\log(1-t) - (1-p+pt )\log \left(1-\frac{pt}{1-p+pt} \right)\\
&> p(1-t)(-t-\frac{3}{5}t^2) + pt = \frac{1}{5}pt^2(2+3t) \ge \frac{2}{5}pt^2
\end{align*}
where we use the fact that $\log(1-x)<-x$ for $x\in(0,1)$ and $\log(1-x)>-x-\frac{3}{5}x^2$ for $x\in (0,0.01)$.
Hence, we get
\begin{align}\label{eq:dn}
\Pr\left(\sum_{j=2}^n a_{1j} \le (n-1)p-npt \right) \le \exp\left( - \frac{2}{5}pt^2\right)=\exp(-10\log n) = n^{-10}.
\end{align}
Since $2\alpha t = 2((n-1)p-1) t \ge npt$ for $n$ sufficiently large, it follows that
\begin{align*}
&\Pr\left(\|E\| \ge 10 \sqrt{\frac{\log n}{np}} \right) \le n\Pr \left(\left|\sum_{j=2}^n a_{1j}-(n-1)p \right| \ge 2\alpha t \right) \\
&\le n\Pr\left(\sum_{j=2}^n a_{1j}\ge (n-1)p + npt \right) +n\Pr\left(\sum_{j=2}^n a_{1j}\le (n-1)p - npt \right)
\le 2n^{-10}.
\end{align*}
By the Borel-Cantelli lemma, we have that $\|E\| \le 10 \sqrt{\frac{\log n}{np}}$ almost surely for all but finitely many $n$.

\end{proof}

To show \eqref{RCorCond1}, we combine Hoeffding's inequality and Lemma \ref{lem:Cond2} to prove the following lemma. 

\begin{lem}\label{lem:Cond1}
Assume $0< p \le p_0<1$ for a constant $p_0$. Further assume $np/\log n\to \infty$. For $\tH_0$ and $E$ as defined in \eqref{e:def tH_0}, we have that both 
$\frac 1{2n} \|\tH_0\|_F^2$ and  $\frac 1{2n} \|\tH_0+E\|_F^2$ are almost surely bounded. 
\end{lem}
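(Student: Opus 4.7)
The plan is to exploit the explicit block structure of $\tH_0$ and $E$, reduce the Frobenius norms to simple scalar quantities, and then use (i) a standard concentration argument for $\|A\|_F^2$, and (ii) the operator-norm bound $\|E\| \le 20\sqrt{\log n/(np)}$ already established in Lemma~\ref{lem:Cond2}.

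First I would compute
$$\|\tH_0\|_F^2 \;=\; \tfrac{1}{\alpha}\|A\|_F^2 + \|I\|_F^2 + \|I\|_F^2 \;=\; \tfrac{1}{\alpha}\|A\|_F^2 + 2n.$$
Since $A_{ij}\in\{0,1\}$ and $A_{ii}=0$, we have $\|A\|_F^2 = \sum_{i\ne j} A_{ij} = 2|E(G)|$, where $|E(G)|$ is $\mathrm{Binomial}\!\left(\binom{n}{2},p\right)$ with mean $\binom{n}{2}p$. Applying Hoeffding's (or Chernoff's) inequality to the indicator sum gives, for any fixed $\varepsilon>0$,
$$\Pr\!\left(\left|\,\|A\|_F^2 - n(n-1)p\,\right| \ge \varepsilon\, n(n-1)p\right) \;\le\; 2\exp\!\bigl(-c\,\varepsilon^2 n^2 p^2\bigr)$$
for an absolute constant $c>0$. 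Under the hypothesis $np/\log n \to \infty$ we have $n^2p^2 \gg n\log n$, so this probability is summable in $n$; Borel--Cantelli then yields $\|A\|_F^2/(n(n-1)p) \to 1$ almost surely. Combined with $\alpha=(n-1)p-1\sim np$, this gives $\tfrac{1}{2n}\|\tH_0\|_F^2 \to 3/2$ almost surely, which in particular is almost surely bounded.

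For the second claim, the Frobenius triangle inequality gives
$$\tfrac{1}{2n}\|\tH_0+E\|_F^2 \;\le\; \tfrac{1}{2n}\bigl(\|\tH_0\|_F + \|E\|_F\bigr)^2 \;\le\; \tfrac{1}{n}\|\tH_0\|_F^2 + \tfrac{1}{n}\|E\|_F^2.$$
The first term is almost surely bounded by the previous step. For the second, the only nonzero block of $E$ is the $n\times n$ diagonal matrix $\tfrac{1}{\alpha}(\E D - D)$, so $E$ has rank at most $n$ and
$$\|E\|_F^2 \;\le\; n\,\|E\|^2.$$
Lemma~\ref{lem:Cond2} yields $\|E\| \le 20\sqrt{\log n/(np)} \to 0$ almost surely, so $\tfrac{1}{n}\|E\|_F^2 \le \|E\|^2 \to 0$ almost surely, completing the proof of almost sure boundedness.

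No step is really an obstacle here: the block structure makes the Frobenius norm computation mechanical, Hoeffding gives summable tails in the regime $np \gg \log n$, and Lemma~\ref{lem:Cond2} does all the work for the perturbation term. The only point requiring mild care is ensuring the Hoeffding tail is summable in $n$, which is precisely why the assumption $np/\log n \to \infty$ (rather than merely $np\to\infty$) is used.
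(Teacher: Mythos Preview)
Your proof is correct and follows essentially the same route as the paper: explicit block-structure computation of $\|\tH_0\|_F^2$, Hoeffding plus Borel--Cantelli for the edge count, then the Frobenius triangle inequality combined with Lemma~\ref{lem:Cond2} for $\tH_0+E$. One small slip: from $np/\log n\to\infty$ you only get $n^2p^2 \gg (\log n)^2$, not $n^2p^2 \gg n\log n$ as you wrote; but this weaker bound already makes $\exp(-c\,n^2p^2)\le \exp\bigl(-c(\log n)^2\bigr)$ summable, so the Borel--Cantelli step is unaffected.
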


\begin{proof}
We begin by stating Hoeffding's inequality \cite{Hoeffding1963}.
%

\newcommand\mybeta{\beta}
\begin{thm}[Hoeffding's inequality {\cite{Hoeffding1963}}]\label{thm:hoeffding} 
Let $\mybeta_1,\ldots,\mybeta_k$ be independent random variables such that for
$1\le i \le k$ we have
$\Pr(\mybeta_i \in [a_i,b_i]) = 1.$
Let $S:= \sum_{i=1}^k \mybeta_i$.  Then for any real $t$, 
$$\Pr(\abs{S- \E(S) }\ge kt ) \le 2\exp\left( - \frac{2 k^2 t^2}{\sum_{i=1}^k (b_i -a_i)^2}\right).$$
\end{thm}

Recall that $\alpha=(n-1)p-1$ and $\tH_0= \left(
\begin{matrix}
\frac1 \newbeta A & -I \\
I & 0
\end{matrix}
 \right)$, where $A=(a_{ij})_{1\le i, j \le n}$ is the adjacency matrix of an Erd\H os-R\'{e}nyi random graph $G(n,p)$.  Thus 
\begin{align*} 
\|\tH_0\|_F^2 &= \frac{1}{\alpha}\|A\|_F^2 + 2\|I\|_F^2 = \frac{1}{\alpha}\sum_{i,j} a_{ij}^2 + 2n =\frac{2}{\alpha}\sum_{i<j} a_{ij} + 2n.
\end{align*}  To apply Hoeffding's inequality, note that $a_{ij}$ $(i<j)$ are iid random variables each taking the value 1 with probability $p$ and 0 otherwise. Let $b_i=1$ and $a_i=0$ for all $i$, and let $k= {n \choose 2}$, which is the number of random entries in $A$ (recall that the diagonal of $A$ is all zeros by assumption).  Letting $S=\sum_{i<j} a_{ij}$, we see that $\mathbb E S = kp$ and so 
\begin{align*}
\Pr\left(\rule{0pt}{12pt}\abs{ S - kp } \ge kt\right) \le 2 \exp(-2kt^2).
\end{align*}
Since $\|\tH_0\|_F^2 = \frac{2}{\alpha} S + 2n$, we obtain that
\begin{align*}
\Pr\left(\rule{0pt}{12pt}\abs{ \frac{1}{2n}\|\tilde{H}_0\|_F^2 - 1 -\frac{kp}{n\alpha}} \ge \frac{kt}{n\alpha}\right) \le 2 \exp(-2kt^2).
\end{align*}
Take $t=p$. For $n$ sufficiently large, 
$$\frac{kt}{n\alpha} =\frac{n(n-1)p/2}{n[(n-1)p-1]}\le \frac{n(n-1)p/2}{n(n-1)p/2}=1$$ and since $p\ge \omega(n) \log n/n$ for $\omega(n)>0$ and $\omega(n)\to \infty$ with $n$, we get 
\begin{align*}
\Pr\left(\rule{0pt}{12pt} \frac{1}{2n}\|\tilde{H}_0\|_F^2  \ge 3\right) \le 2 \exp(-2kt^2)\le 2\exp(-\omega(n)^2 \log^2 n/2).
\end{align*}
By the Borel-Cantelli lemma, we conclude that $\frac{1}{2n}\|\tilde{H}_0\|_F^2$ is bounded almost surely. Since $\|E\|_{\max}=\|E\|$, by triangle inequality, we see
\begin{align*}
\frac 1{2n} \|\tH_0+E\|_F^2 &\le \frac{1}{2n} (\|\tilde{H}_0\|_F + \|E\|_F)^2\le \frac{1}{n} \|\tilde{H}_0\|_F^2 + \frac{1}{n} \|E\|_F^2 \\
&\le \frac{1}{n} \|\tilde{H}_0\|_F^2 + \|E\|.
\end{align*}
By Lemma \ref{lem:Cond2}, we get $\frac 1{2n} \|\tH_0+E\|_F^2$ is bounded almost surely. This completes the proof.

\end{proof}

The last part of proving Theorem~\ref{t:bulkESD} by way of Corollary~\ref{t:ReplCor} is proving that \eqref{RCorCond3} holds with $M_m=\tH_0$ and $f(z,m)=C_z$, a constant depending only on $z$.   The following lemma will be proved by writing a formula for the singular values of $\tH_0$ in terms of the eigenvalues of the adjacency matrix $A$, which are well understood.   A number of elementary technical details will be needed to prove that the smallest singular value is bounded away from zero, and these appear in Lemma~\ref{lem:sing_min}.

\begin{lem}\label{lem:Cond3}
Let $\tH_0$ be as defined in \eqref{e:def tH_0} and let $z$ be a complex number such that $\Im(z) \ne 0$ and $\abs{z} \ne 1$ (note that these conditions exclude a set of complex numbers of Lebesgue measure zero).  Then there exists a constant $C_z$ depending only on $z$ such that $\opnorm{ (\tH_0 -zI)^{-1}} \le C_z$ with probability 1 for all but finitely many $n$.
\end{lem}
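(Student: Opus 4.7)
The plan is to exploit the natural $2\times 2$ block structure of $\tH_0 - zI$ via Schur complement inversion. Writing
\begin{equation*}
\tH_0 - zI \;=\; \begin{pmatrix} \tfrac{1}{\newbeta} A - zI & -I \\ I & -zI \end{pmatrix},
\end{equation*}
I would first exclude the additional measure-zero point $z=0$ so that the $(2,2)$ block $-zI$ is invertible with inverse $-(1/z)I$. The associated Schur complement is
\begin{equation*}
S \;=\; \tfrac{1}{\newbeta} A - zI - (-I)\bigl(-\tfrac{1}{z}I\bigr)(I) \;=\; \tfrac{1}{\newbeta} A - \bigl(z + \tfrac{1}{z}\bigr)I,
\end{equation*}
and once $S$ is shown to be invertible, the standard block inversion formula expresses $(\tH_0 - zI)^{-1}$ as a matrix whose four blocks are scalar multiples of $S^{-1}$ by $1$, $-1/z$, $1/z$, and $-1/z^{2}$, plus a single $-(1/z)I$ correction term in the $(2,2)$ block.

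The key step is then to bound $\opnorm{S^{-1}}$. Since $A$ is real symmetric, its spectrum is real, so for any complex $w$ with $\Im w \neq 0$,
\begin{equation*}
\opnorm{(A/\newbeta - wI)^{-1}} \;=\; \frac{1}{\min_{i}\bigl|\lambda_{i}(A)/\newbeta - w\bigr|} \;\le\; \frac{1}{|\Im w|}.
\end{equation*}
A direct computation gives $\Im(z + 1/z) = \Im(z)\,(|z|^{2}-1)/|z|^{2}$, which is nonzero precisely under the stated hypotheses $|z|\neq 1$ and $\Im(z) \neq 0$. This shows $\opnorm{S^{-1}}$ is bounded by a finite constant depending only on $z$; substituting into the block inversion formula together with the triangle inequality and the trivial bounds $\opnorm{(1/z^{k})I} = 1/|z|^{k}$ yields $\opnorm{(\tH_0 - zI)^{-1}} \le C_z$ for some constant $C_{z}$ depending only on $z$.

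A noteworthy feature is that this argument uses only the Hermiticity of $A$ and no randomness, so the bound is in fact deterministic for every realization of $G(n,p)$ and every $n$---strictly stronger than the ``probability $1$ for all but finitely many $n$'' statement required. There is no genuine obstacle beyond routine algebra; the only subtlety is that the constant $C_z$ degenerates as $z$ approaches the excluded set $\{\Im z = 0\}\cup\{|z|=1\}$, but this set has Lebesgue measure zero and coincides exactly with the set excluded in the hypothesis, which is precisely what is needed when Lemma~\ref{lem:Cond3} is applied to verify condition \eqref{RCorCond3} of Corollary~\ref{t:ReplCor} at almost every $z$.
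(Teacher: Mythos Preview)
Your argument is correct and takes a genuinely different route from the paper.

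The paper computes the full characteristic polynomial of $(\tH_0 - zI)(\tH_0 - zI)^*$, factors it into quadratics indexed by the eigenvalues $\lambda_i$ of $A/\newbeta$, and thereby obtains explicit formulas for every singular value of $\tH_0 - zI$. It then invokes a separate technical result (Lemma~\ref{lem:sing_min}, proved in Appendix~\ref{ap:sing_min_pf} by a three-case calculus argument) to show that the smallest of these is bounded below by a positive constant depending only on $z$. Your approach sidesteps all of this: the Schur complement $S = A/\newbeta - (z + 1/z)I$ is a shift of a Hermitian matrix by a scalar with nonzero imaginary part, so the resolvent bound $\opnorm{S^{-1}} \le 1/|\Im(z+1/z)|$ is immediate from the spectral theorem, and the block-inversion formula converts this directly into a bound on $\opnorm{(\tH_0 - zI)^{-1}}$. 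This is shorter and entirely avoids the appendix lemma. What the paper's route buys is an explicit description of \emph{all} the singular values of $\tH_0 - zI$, but that extra information is not used anywhere else, so nothing is lost by your shortcut.

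One cosmetic remark: you say you ``first exclude the additional measure-zero point $z=0$,'' but $z=0$ already satisfies $\Im z = 0$ and is therefore excluded by the hypothesis; no additional exclusion is needed.
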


\begin{proof}
We will compute all the singular values of $\tH_0 -zI$, showing that they are bounded away from zero by a constant depending on $z$.  The proof does not use randomness and depends only on facts about the determinant and singular values and on the structure of $\tH_0$; in fact, the proof is the same if $\tH_0$ is replaced with any matrix $\begin{pmatrix}
M& -I \\ I & 0
\end{pmatrix}
$ with $M$ Hermitian.

To find the singular values of $\tH_0$ we will compute the characteristic polynomial $\chi(\tilde w)$ for $(\tH_0 -zI)(\tH_0 -zI)^*$, using the definition of $\tH_0$ in \eqref{e:def tH_0}, and assuming that $\tilde w= w + 1+\abs{z}^2$; thus,
\begin{align*}
\chi(\tilde w)&:=\det\left((\tH_0 -zI)(\tH_0 -zI)^* - (w+1+\abs{z}^2)I\right) \\
&=
\det\begin{pmatrix}
\frac{A^2}{\alpha} - (z+\bar z) \frac A \newbeta - wI & \frac A\newbeta +(\bar z -z) I\\
\frac A\newbeta + (z - \bar z)I & -wI
\end{pmatrix}.
\end{align*}
We can use the fact that if $\begin{pmatrix}X & Y \\ Z & W\end{pmatrix}$ is a matrix composed of four $n\times n$ square blocks where $W$ and $Z$ commute, then $\det \begin{pmatrix}X & Y \\ Z & W\end{pmatrix} = \det(XW-YZ)$ (see \cite[Theorem 3]{Silvester}).  Thus, it is equivalent to consider
\begin{align*}
\det\left(
w\left(\frac{A^2}{\alpha} - (z+\bar z) \frac A \newbeta -w I\right) + \left(\frac A\newbeta +(\bar z -z) I\right)\left(\frac A\newbeta + (z - \bar z)I\right)
\right).
\end{align*}
Because $\frac{A}{\newbeta}$ is Hermitian, it can be diagonalized to $L=\diag(\lambda_1,\dots,\lambda_n)$, and thus the above determinant becomes:
\begin{align*}
&\det\left(
w\left(\frac{A^2}{\alpha} - (z+\bar z) \frac A \newbeta -w I\right) +  \left(\frac A\newbeta +(\bar z -z) I\right)\left(\frac A\newbeta + (z - \bar z)I\right)
\right)\\
&\quad= \det\left(
w\left(L^2 - (z+\bar z) L -w I\right) +  \left(L +(\bar z -z) I\right)\left( L + (z - \bar z)I\right)
\right)\\
&\quad= \prod_{i=1}^n
\left(w \left(\lambda_i^2 - (z+\bar z) \lambda_i -w \right)+ \left( \lambda_i + (z - \bar z)\right)\left(\lambda_i +(\bar z -z) \right)
\right)\\
&\quad= \prod_{i=1}^n \left(
-w^2
+ 
w\left( \lambda_i^2 - (z+\bar z) \lambda_i \right)
+
\lambda_i^2-(z-\bar z)^2
\right).
\end{align*}
The quadratic factors can then be explicitly factored, showing that each $\lambda_i$ generates two singular values for $\tH_0 -zI$, each being the positive square root of
$$
1+\abs{z}^2+\frac12\left( \lambda_i^2 - (z+\bar z)\lambda_i\right)
\pm
\frac12 \sqrt{ \left( \lambda_i^2 - (z+\bar z) \lambda_i \right)^2+4(\lambda_i^2-(z-\bar z)^2)  }.
$$

The proof of Lemma~\ref{lem:Cond3} is thus completed by Lemma~\ref{lem:sing_min} (stated and proved below), which shows that the quantity above is bounded from below by a positive constant depending only on $z$.
\end{proof}

\begin{lem}\label{lem:sing_min}
Let $z$ be a complex number satisfying $\Im(z) \ne 0$ and $\abs{z} \ne 1$.  Then for any real number $\lambda$, we have that
\begin{equation}\label{e:singsqr}
1+\abs{z}^2+\frac12\left( \lambda^2 - (z+\bar z)\lambda\right)
\pm
\frac12 \sqrt{ \left( \lambda^2 - (z+\bar z) \lambda \right)^2+4(\lambda^2-(z-\bar z)^2)  } \ge C_z,
\end{equation}
where $C_z$ is a positive real constant depending only on $z$.
\end{lem}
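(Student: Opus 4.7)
Write $S_\pm(\lambda)$ for the two expressions appearing on the left-hand side of \eqref{e:singsqr}, with $S_-\le S_+$. Since $S_+\ge S_-$, only the minus-sign case is nontrivial. The plan is to (i) compute the product $S_-S_+$ explicitly as a quadratic in $\lambda$ and show it is uniformly bounded below by a positive constant depending only on $z$; (ii) deduce from $S_-S_+>0$ that $S_-$ is strictly positive for all real $\lambda$; and (iii) use the asymptotic behavior of $S_-$ as $|\lambda|\to\infty$ together with continuity to conclude that $S_-$ attains a positive minimum over $\lambda\in\R$, which we take as $C_z$.

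For step (i), let $u=\lambda^2-(z+\bar z)\lambda$ and note $(z-\bar z)^2=-4(\Im z)^2$. Writing $S_\pm = A\pm B$ with $A=1+|z|^2+u/2$ and $B=\tfrac12\sqrt{u^2+4\lambda^2+16(\Im z)^2}$, direct expansion gives
\begin{equation*}
S_-S_+ \;=\; A^2-B^2 \;=\; |z|^2\lambda^2 \;-\; (z+\bar z)(1+|z|^2)\lambda \;+\; |1+z^2|^2,
\end{equation*}
where one uses $(1+|z|^2)^2+(z-\bar z)^2=|1+z^2|^2$. This is a quadratic in $\lambda$ with positive leading coefficient $|z|^2>0$ (possible since $\Im z\ne 0$ forces $z\ne 0$). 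The key algebraic identity to verify is that its discriminant factors as
\begin{equation*}
(z+\bar z)^2(1+|z|^2)^2 - 4|z|^2|1+z^2|^2 \;=\; -4(\Im z)^2(1-|z|^2)^2,
\end{equation*}
which under the hypotheses $\Im z\ne 0$ and $|z|\ne 1$ is strictly negative. Consequently the quadratic has no real roots and attains a positive minimum
\begin{equation*}
\min_{\lambda\in\R} S_-(\lambda)\,S_+(\lambda) \;=\; \frac{(\Im z)^2(1-|z|^2)^2}{|z|^2}\;>\;0.
\end{equation*}

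For step (ii), observe that $A=1+|z|^2+u/2=1+(\Im z)^2+(\Re z-\lambda/2)^2+\lambda^2/4>0$, and $S_+\ge A>0$; combined with $S_-S_+>0$ this forces $S_-(\lambda)>0$ for every real $\lambda$. For step (iii), expanding $\sqrt{u^2+4\lambda^2+16(\Im z)^2}=u+\tfrac{2\lambda^2+8(\Im z)^2}{u}+O(u^{-3})$ as $|\lambda|\to\infty$ (so that $u\sim\lambda^2\to+\infty$) shows $S_-(\lambda)\to 1+|z|^2-1=|z|^2>0$. Since $S_-$ is continuous on $\R$, strictly positive, and has a positive limit at $\pm\infty$, it attains a positive minimum $C_z>0$ depending only on $z$, completing the proof.

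The main obstacle is the algebraic identity in step (i)–(ii), namely recognizing that the discriminant of $S_-S_+$ simplifies to $-4(\Im z)^2(1-|z|^2)^2$. This factorization is what makes the two excluded sets $\{\Im z=0\}$ and $\{|z|=1\}$ precisely those on which the argument breaks down, and it can be verified by a direct expansion with $z=a+bi$ (the left-hand side becomes $b^2(1-(a^2+b^2))^2$ after systematic cancellation). Everything else—the nonnegativity $A>0$, the asymptotics $S_-\to|z|^2$, and the continuity-compactness argument—is routine once this identity is in hand.
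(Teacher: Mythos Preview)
Your proof is correct and takes a genuinely different, considerably cleaner route than the paper's. The paper's appendix substitutes $z=a+ib$, sets $\gamma=b^2$, and grinds through three separate cases, in each one minimizing the function $g(\lambda,a,\gamma)$ by calculus over either $\gamma$ or $a$ and then bounding the result by hand. Your approach instead computes the product $S_-S_+$; what you have really shown (though you do not say so explicitly) is the identity $S_-S_+=|z^2-\lambda z+1|^2$, since $z^2-\lambda z+1=-z(\lambda-(z+1/z))$ gives exactly your quadratic with minimum $(\Im z)^2(1-|z|^2)^2/|z|^2$. This makes it transparent why the two excluded sets $\{\Im z=0\}$ and $\{|z|=1\}$ are precisely the obstruction: they are exactly where $z+1/z$ can be real, i.e., where the product can vanish for real $\lambda$. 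The paper's case analysis produces explicit lower bounds in each regime at the cost of a page or two of bookkeeping; your argument gives the existence of $C_z$ in a few lines, and the explicit minimum of $S_-S_+$ could be leveraged to get a quantitative $C_z$ with only a little extra work bounding $S_+$ from above. One minor slip: in the parenthetical verification you say the discriminant becomes $b^2(1-(a^2+b^2))^2$, but it should be $-4b^2(1-(a^2+b^2))^2$; the main displayed identity is stated correctly, so this does not affect the argument.
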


The proof of Lemma~\ref{lem:sing_min} is given in Appendix~\ref{ap:sing_min_pf} using elementary calculus, facts about matrices, and case analysis.  Lemma~\ref{lem:sing_min} completes the proof of Lemma~\ref{RCorCond3} and thus of Theorem~\ref{t:bulkESD}.

\section{Perturbation theory: proving Theorem \ref{thm:main-dense}}\label{sec:perturbation}

In this section, we study the eigenvalues of $H$ via perturbation theory. Recall from that the discussion in the beginning of Section \ref{sec:ESD} that $\tH$ in \eqref{e:def tH_0} has the same eigenvalues as $H/\sqrt{\alpha}$. We consider $\tH = \tH_0 + E$ where $E=
\left(
\begin{matrix}
0 & I+\frac1{\alpha}(I-D) \\
0 & 0
\end{matrix}
 \right).$  Note that $H_0/\sqrt{\alpha}$ and $\tH_0$ also have identical eigenvalues.

Let us begin by defining the spectral separation of matrices. Denote the eigenvalues of a matrix $M$ by $\eta_i(M)$'s. The spectral variation of $M+E$ with respect to $M$ is defined by
$$S_{M}(M +  E) = \max_{j} \min_i |\eta_j(M+E) - \eta_i (M)|.$$

\begin{thm}[Bauer-Fike theorem; see Theorem 6 from \cite{BLM}]\label{thm:bauer-fike}
If $H_0$ is diagonalizable by the matrix $Y$, then
$$S_{H_0}(H_0 +  E) \le \opnorm{E} \cdot \opnorm{Y} \cdot \|Y^{-1}\|.$$
Denote by $\mathcal{C}_i:=\mathcal{B}(\mu_i(H_0), R)$ the ball in $\mathbb{C}$ centered at $\mu_i(H_0)$ with radius $R=\opnorm{E} \cdot \opnorm{Y} \cdot \|Y^{-1}\|$. Let $\mathcal{I}$ be a set of indices such that $$(\cup_{i\in \mathcal{I}} \mathcal{C}_i) \cap (\cup_{i\notin \mathcal{I}} \mathcal{C}_i) = \emptyset.$$ Then the number of eigenvalues of $H_0 + E$ in $\cup_{i\in \mathcal{I}} \mathcal{C}_i$ is exactly $|\mathcal{I}|$.
\end{thm}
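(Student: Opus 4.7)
The plan is to prove the two assertions in turn. For the spectral-variation bound, the idea is standard: given any eigenvalue $\mu$ of $H_0 + E$, I would first dispose of the trivial case in which $\mu \in \{\mu_i(H_0)\}$, since then $\min_i |\mu - \mu_i(H_0)| = 0$ and there is nothing to show. Otherwise $H_0 - \mu I$ is invertible. Picking a nonzero eigenvector $v$ of $H_0 + E$ with eigenvalue $\mu$, the identity $(H_0 - \mu I) v = -E v$ rearranges to $v = (\mu I - H_0)^{-1} E v$, so taking operator norms gives $1 \le \opnorm{(\mu I - H_0)^{-1}} \cdot \opnorm{E}$.

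The key step is then to use the diagonalization $H_0 = Y^{-1} \Lambda Y$ with $\Lambda = \operatorname{diag}(\mu_i(H_0))$, which yields $(\mu I - H_0)^{-1} = Y^{-1}(\mu I - \Lambda)^{-1} Y$, and hence
\begin{equation*}
\opnorm{(\mu I - H_0)^{-1}} \le \opnorm{Y^{-1}} \cdot \opnorm{(\mu I - \Lambda)^{-1}} \cdot \opnorm{Y} = \frac{\opnorm{Y^{-1}} \cdot \opnorm{Y}}{\min_i |\mu - \mu_i(H_0)|},
\end{equation*}
because the operator norm of the diagonal matrix $(\mu I - \Lambda)^{-1}$ is the maximum of $1/|\mu - \mu_i(H_0)|$. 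Combining the two displays gives $\min_i |\mu - \mu_i(H_0)| \le \opnorm{E} \cdot \opnorm{Y} \cdot \opnorm{Y^{-1}}$, and maximizing over $\mu = \mu_j(H_0+E)$ yields the spectral-variation bound.

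For the localization (counting) statement, my plan is a homotopy/continuity argument. Consider the family $H(t) := H_0 + t E$ for $t \in [0,1]$, and note that its eigenvalues depend continuously on $t$ (as roots of the characteristic polynomial, counted with multiplicity, they admit continuous trajectories in $\mathbb{C}$ by a standard result on continuity of roots). Applying the spectral-variation bound already proved to $H_0$ and the perturbation $t E$ shows that, at every $t \in [0,1]$, each eigenvalue of $H(t)$ lies within distance $t \opnorm{E} \opnorm{Y} \opnorm{Y^{-1}} \le R$ of some $\mu_i(H_0)$, i.e., inside $\bigcup_i \mathcal{C}_i$. At $t=0$ the number of eigenvalues sitting in $\bigcup_{i\in\mathcal{I}}\mathcal{C}_i$ is exactly $|\mathcal{I}|$ (with multiplicity). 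Since the disjointness hypothesis $(\cup_{i\in \mathcal{I}} \mathcal{C}_i) \cap (\cup_{i\notin \mathcal{I}} \mathcal{C}_i) = \emptyset$ forbids any continuous eigenvalue trajectory from crossing between the two clusters without leaving $\bigcup_i \mathcal{C}_i$ entirely, the count is locally constant in $t$, hence constant on $[0,1]$, giving the conclusion at $t=1$.

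The main obstacle I anticipate is justifying the continuity-and-counting step cleanly: one must be careful that eigenvalues are tracked with multiplicity and that no trajectory escapes $\bigcup_i \mathcal{C}_i$ along the homotopy. The cleanest way I see is to invoke continuity of the roots of the characteristic polynomial in its coefficients, combined with the already-established containment $\sigma(H(t)) \subset \bigcup_i \mathcal{C}_i$ for all $t\in[0,1]$; the disjointness then forces the integer-valued function $t \mapsto \#\{\text{eigenvalues of } H(t) \text{ in } \cup_{i\in\mathcal{I}}\mathcal{C}_i\}$ to be continuous, hence constant. Alternatively, one may prove this counting statement by a contour-integral / Rouché argument applied to $\det(zI - H(t))$ around the boundary of $\bigcup_{i\in\mathcal{I}}\mathcal{C}_i$, but the homotopy route is more elementary and suffices here.
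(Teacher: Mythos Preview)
Your proposal is correct and follows the standard route. Note, however, that the paper does not actually prove this theorem: it is quoted as a known result (``see Theorem~6 from~\cite{BLM}'') and used as a black box in Section~\ref{sec:perturbation}. There is no proof in the paper to compare against for the first (spectral-variation) assertion.

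That said, the paper's \LaTeX\ source does contain a commented-out sketch for the second (counting) assertion, and it is exactly your homotopy argument: set $H(t)=H_0+tE$ for $t\in[0,1]$, observe that the eigenvalues $\mu(t)$ vary continuously, apply the first part to conclude $\sigma(H(t))\subset\bigcup_i\mathcal{C}_i$ for every $t$, and use the disjointness of the two clusters to conclude that the count in $\bigcup_{i\in\mathcal{I}}\mathcal{C}_i$ is constant along the path. So your approach to the second half coincides with what the authors evidently had in mind; your treatment of the first half is the textbook Bauer--Fike computation and is fine.
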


We will bound the operator norm of $E$ and the condition number $\opnorm{Y} \opnorm{Y^{-1}}$ of $Y$ to prove Theorem~\ref{thm:main-dense}.

By Lemma~\ref{lem:Cond2}, we know that $\opnorm{E} \le 20\sqrt{\frac{\log n}{np}}$ with probability 1 for all but finitely many $n$.


To bound the condition number of $Y$, we note that the square of the condition number of $Y$ is equal to the largest eigenvalue of $YY^*$ divided by the smallest eigenvalue of $YY^*$.  Using the explicit definition of $Y$ from \eqref{e:lefteig}, we see from the fact that the $v_i$ are orthonormal that 
\begin{align*}
YY^*= \diag(Y_1,\ldots,Y_n)
\end{align*}
where $Y_i$'s are $2\times 2$ block matrices of the following form 
\begin{align*}
Y_i=\begin{pmatrix}
y_{2i-1}^* y_{2i-1} & y_{2i-1}^* y_{2i}\\
y_{2i}^* y_{2i-1} & y_{2i}^* y_{2i}
\end{pmatrix}.
\end{align*}
Recall that
\begin{align*}
y_{2i-1}^*=\frac{1}{\sqrt{1+|\mu_{2i-1}|^2}}\begin{pmatrix}
-\mu_{2i-1} v_i^T & v_i^T\\
\end{pmatrix} \quad\mbox{ and }\quad
y_{2i}^*=\frac{1}{\sqrt{1+|\mu_{2i}|^2}}\begin{pmatrix}
-\mu_{2i}v_i^T & v_i^T
\end{pmatrix}
\end{align*}
We then have 
$
Y_i=\begin{pmatrix}
1 & \gamma_i\\
\overline{\gamma}_i & 1
\end{pmatrix}
$
where $$\gamma_i:= \frac{\mu_{2i-1}\overline{\mu}_{2i}+1}{\sqrt{(1+|\mu_{2i-1}|^2)(1+|\mu_{2i}|^2)}}.$$ It is easy to check that the eigenvalues of $Y_i$ are $1\pm |\gamma_i|$. The eigenvalues of $YY^*$ are the union of all the eigenvalues of the blocks, and so we will compute the eigenvalues $1\pm |\gamma_i|$ based on whether $\lambda_i$ produced real or complex eigenvalues for $\tH_0$.  

For $i=1$, the eigenvalue $\lambda_1$ produces two real eigenvalues for $\tH_0$.    Using the facts that $\mu_{1}\mu_{2}=1$ and $\mu_{1}+ \mu_{2}=\lambda_1$, which together imply that $\mu_{1}^2+\mu_{2}^2 = \lambda_1^2-2$, we see that in this case $\gamma_1^2=\frac{4}{\lambda_1^2}$, and so the two eigenvalues corresponding to this block are $1\pm |\gamma_1|=1 \pm 2/\abs{\lambda_1}$. By \eqref{eq:eigNA}, we see that $1\pm |\gamma_i| = 1\pm \frac{2}{\sqrt{np}}(1+o(1))$ with probability $1-o(1)$.

For $i\ge 2$, the eigenvalue $\lambda_i$ produces two complex eigenvalues for $\tH_0$, both with absolute value 1 (see Section~\ref{sec:ESD}).  In this case, $\gamma_i = \frac{1+\mu_{2i-1}^2}{2}$.  Again using the facts that $\mu_{2i-1}\mu_{2i}=1$ and $\mu_{2i-1}^2+\mu_{2i}^2 = \lambda_i^2-2$, we see that $\overline \gamma_i \gamma_i = \lambda_i^2/4$, which shows that the two eigenvalues corresponding to this block are $1 \pm \abs{\lambda_i}/{2}$.

By \cite{Vu07} (see Theorem \ref{thm:old} in Section~\ref{sec:ESD}) we know that when $p\ge \frac{\log^{2/3+\eps} n}{n^{1/6}}$,  $\max_{2\le i\le n} \abs{\lambda_i} \le 2 \sqrt{1-p} + O(n^{1/4}\log n/\sqrt{np})$ with probability tending to 1, and thus the largest and smallest eigenvalues coming from any of the blocks corresponding to $i\ge 2$ are $1 + \sqrt{1-p} +O(n^{1/4}\log n/\sqrt{np}) $ and $1-\sqrt{1-p}+ O(n^{1/4}\log n/\sqrt{np})$ with probability tending to 1.  Combining this information with the previous paragraph, we see that the condition number for $Y$ is 
\begin{align*}
&\sqrt{\frac{1+\sqrt{1-p}+O(n^{1/4}\log n/\sqrt{np})}{1-\sqrt{1-p}+O(n^{1/4}\log n/\sqrt{np})}}=\sqrt{\frac{(1+\sqrt{1-p})^2 + O(n^{-1/4}p^{-1/2}\log n)}{p+ O(n^{-1/4}p^{-1/2}\log n)}}\\
&=\sqrt{\frac{2}{p} \frac{1+\sqrt{1-p} -p/2 +O(n^{-1/4}p^{-1/2}\log n)}{1+O(n^{-1/4}p^{-3/2}\log n)}} \\
& = \sqrt{\frac{2}{p} \left( (1+\sqrt{1-p}) + O(n^{-1/4}p^{-3/2}\log n) \right)}
\le \frac{2}{\sqrt{p}}
\end{align*}
for $n$ sufficiently large. In the third equation above, we use the Taylor expansion. In the last inequality,  we use that $n^{-1/4}p^{-3/2}\log n \le \log^{-3\eps/2} n=o(1)$ since $p\ge \frac{\log^{2/3+\eps} n}{n^{1/6}}$. 

Finally, we apply Lemma \ref{lem:Cond2} and Bauer-Fike Theorem (Theorem~\ref{thm:bauer-fike}) with $R=\frac{40}{\sqrt{p}}\sqrt{\frac{\log n}{np}}=40\sqrt{\frac{\log n}{np^2}}$ to complete the proof.

\appendix

\section{Proof of Lemma~\ref{lem:sing_min}}\label{ap:sing_min_pf}

\newcommand\partder[1]{\frac{\partial}{\partial #1}}
\newcommand\secpartder[1]{\frac{\partial^2}{\partial #1^2}}

It is sufficient to show that the left-hand side of \eqref{e:singsqr} (replacing $\pm$ with $-$) is bounded below by a positive constant $C_z>0$ depending only on $z$.  Substituting $z=a+ib$ where $a,b\in \mathbb R$, we see that the left-hand side of \eqref{e:singsqr} is bounded below by   

\begin{align}
1+a^2{+}&\; b^2+\frac12\left( \lambda^2 - 2a\lambda\right)
-
\frac12 \sqrt{ \left( \lambda^2 - 2a \lambda \right)^2+4(\lambda^2+ 4b^2)  }\nonumber\\
&=1+a^2+ b^2+\frac{\lambda^2}2 -a\lambda
-
\sqrt{ \left( \frac{\lambda^2 - 2a\lambda }{2} \right)^2+\lambda^2+ 4b^2}
\label{e:singsqrab}
%
%
\end{align}
Note that the quantity in \eqref{e:singsqrab} is always at least zero because it is a singular value for a matrix.

Define a function $g$ by 
\begin{equation}\label{e:gdef}
 g(\lambda, a,\gamma) = 
1+a^2+ \gamma+\frac{\lambda^2}2 -a\lambda
-
\sqrt{ \left( \frac{\lambda^2 - 2a\lambda }{2} \right)^2+\lambda^2+ 4\gamma}
\end{equation}
(where we replaced $b^2$ by $\gamma$ in the quantity \eqref{e:singsqrab}).  It is sufficient to complete the proof if we show that for all real $a$ and $\gamma$ satisfying $\gamma > 0$ and $a^2+\gamma \ne 1$ that  $g(\lambda,a,\gamma) \ge C_{a,\gamma} > 0$, where $C_{a,\gamma}$ is a constant depending only on $a$ and $\gamma$.  We will prove this by considering three cases and using calculus.

\textbf{Case I:} 
$ \left(\frac{\lambda^2 - 2a\lambda }{2} \right)^2+\lambda^2 \ge 4$.
In this case we will minimize $g$ over $\gamma >0$.  Note that
$$\frac{\partial}{\partial \gamma} g = 1 - \frac{2}{\sqrt{ \left(\frac{\lambda^2 -2a\lambda}{2}\right)^2 + \lambda^2 + 4 \gamma}}.$$
The quantity in the denominator of the fraction above is always greater than 2 since $ \left(\frac{\lambda^2 - 2a\lambda }{2} \right)^2+\lambda^2 \ge 4$ and $\gamma >0$ by assumption; thus the derivative $\partder{\gamma} g$ is positive for all $\gamma$, and hence $g$ (viewed as a function of $\gamma$) is strictly increasing as $\gamma$ increases, for any $a$ and any $|\lambda| \ge 2$.   Also, note that
$$\secpartder{\gamma} g = \frac{4}{\left(\left( \frac{\lambda^2 - 2a\lambda }{2} \right)^2+\lambda^2+ 4\gamma\right)^{3/2}},$$
which is always strictly positive, showing that $g$ is concave upwards as a function of $\gamma$, for any $a$ and any $|\lambda| \ge 2$.
By the above we know that $\partder{\gamma} g(\lambda,a,\gamma/2) \ge 1-\frac{2}{\sqrt{4 + 4\gamma}}$, and we also know that $g(\lambda,a,\gamma) \ge 0$ for all $\lambda, a,\gamma$ (because it is a singular value); thus, using the fact that the tangent line at $\gamma/2$ is strictly below $g$ (since it is concave upwards), we see that $g(\lambda,a,\gamma) \ge \left(1-\frac{2}{\sqrt{4 + 4\gamma}}\right)\frac{\gamma}{2},$ which is a positive constant depending only on $\gamma >0$, completing the proof in Case I.

\textbf{Case II:} 
$ \left(\frac{\lambda^2 - 2a\lambda }{2} \right)^2+\lambda^2 < 4$ (which implies $\abs\lambda < 2$)
and $\abs{ 1 -\frac{\lambda^2}{4} -\gamma} \ge \frac12\abs{a^2+\gamma -1}$.
In this case, we will minimize $g$ over $a$.  Recalling the definition of $g$ in \eqref{e:gdef}, we see that

\begin{align*}
\partder{a}g &= 2a - \lambda - \frac{2\left( \frac{\lambda^2 - 2a\lambda }{2} \right)(-\lambda)}{2\sqrt{ \left( \frac{\lambda^2 - 2a\lambda }{2} \right)^2+\lambda^2+ 4\gamma}
}\\
&=(2a-\lambda)\left( 1 - \frac{\lambda^2}{2\sqrt{ \left( \frac{\lambda^2 - 2a\lambda }{2} \right)^2+\lambda^2+ 4\gamma}}\right).
\end{align*}

Because $\abs \lambda < 2$, we see that $\frac{\lambda^2}{2\sqrt{ \left( \frac{\lambda^2 - 2a\lambda }{2} \right)^2+\lambda^2+ 4\gamma}} < \frac{\abs\lambda}{\sqrt{ \left( \frac{\lambda^2 - 2a\lambda }{2} \right)^2+\lambda^2+ 4\gamma}} \le 1$, which shows that $\partder{a} g$ has the same sign as $2a-\lambda$ and thus achieves a minimum when $a = \lambda/2$.  Thus, using the minimum of $g$ over $a$ in this case, we have 
$$g(\lambda,a,\gamma) \ge g(\lambda,\lambda/2,\gamma) = 1 + \frac{\lambda^2}{4} + \gamma - \sqrt{\lambda^2 + 4 \gamma}.$$
Substituting $x = \sqrt{\frac{\lambda^2}4+\gamma}$ into the right-hand side, we see that $g$ is bounded below by the quantity $1+x^2 - 2x =(x-1)^2$ for positive $x$ satisfying $\abs{x^2-1} \ge \frac12\abs{a^2+\gamma -1}$ (which follows from the assumptions on $a$ and $\gamma$ in this case).  We will complete the argument by proving the claim that $\abs{x-1} \ge \min\{1, \frac16 \abs{a^2+\gamma -1} \}$, which is a positive constant depending only on $a$ and $\gamma$.  To prove this claim, note that if $\abs x \ge 2$, then $\abs{x-1}\ge 1$; and on the other hand, if $\abs x < 2$, then $\abs{x-1}=\frac{\abs{x^2-1}}{\abs{x+1}} > \frac13\abs{x^2-1} \ge \frac16\abs{a^2+\gamma -1}$. This completes the proof in Case II.


\textbf{Case III:} $ \left(\frac{\lambda^2 - 2a\lambda }{2} \right)^2+\lambda^2 < 4$ and $\abs{ 1 -\frac{\lambda^2}{4} -\gamma} < \frac12\abs{a^2+\gamma -1}$.  In this case, we will first minimize over $\gamma$.  Note that
$$\partder{\gamma} g = 1 - \frac{2}{\sqrt{ \left( \frac{\lambda^2 - 2a\lambda }{2} \right)^2+\lambda^2+ 4\gamma}
}.$$
Because $\left( \frac{\lambda^2 - 2a\lambda }{2} \right)^2+\lambda^2 < 4$, we see from $\partder{\gamma}g$ that $g$ has a minimum when $\gamma = \gamma_0(\lambda,a):=1-\frac{\lambda^2}{4}-\left( \frac{\lambda^2 - 2a\lambda }{4} \right)^2$.  Thus, $g(\lambda,a,\gamma) \ge g(\lambda,a,\gamma_0(a,\lambda))$.  Thus, we can find a general lower bound on $g$ in this case by demonstrating a lower bound on 
\begin{align}
g(\lambda,a,\gamma_0(a,\lambda))&= 1+a^2+1-\frac{\lambda^2}{4}-\left( \frac{\lambda^2 - 2a\lambda }{4} \right)^2+\frac{\lambda^2}{2}-a\lambda - 2\nonumber\\
&= a^2+\frac{\lambda^2}{4}-\left( \frac{\lambda^2 - 2a\lambda }{4} \right)^2-a\lambda\nonumber\\
&= a^2-a\lambda+\frac{\lambda^2}{4}-\frac1{16}\left(\lambda^4 - 4a\lambda^3+4a^2\lambda^2 \right)\nonumber\\
&= a^2\left(1-\frac{\lambda^2}{4}\right)-a\lambda\left(1-\frac{\lambda^2}{4}\right)+\frac{\lambda^2}{4}\left(1 -\frac{\lambda^2}{4} \right)\nonumber\\
&= \left(1-\frac{\lambda^2}{4}\right)\left(a^2-a\lambda+\frac{\lambda^2}{4}\right)\nonumber\\
&= \left(1-\frac{\lambda^2}{4}\right)\left(a-\frac{\lambda}{2}\right)^2\label{e:ggam1}\\
&>\frac{\lambda^4}{4}\left(a-\frac{\lambda}{2}\right)^4,\label{e:ggam2}
\end{align} 
where the last inequality used $1-\lambda^2/4 > \frac{\lambda^2}{4}(a-\lambda/2)^2$, which is equivalent to the first assumption of Case III.

We know that $\abs \lambda < 2$ by assumption. Therefore, $1-\lambda^2/4$ is positive, and thus, the minimum of $g(\lambda,a,\gamma_0(a,\lambda))$ will occur when $a$ is as close as possible to $\lambda/2$.  We claim that, given the Case III assumptions, we have $\abs{a -\lambda/2} \ge c_2:= -1+\sqrt{1+\frac12\abs{a^2+\gamma -1}}>0$, a constant depending only on $a$ and $\gamma$.  To prove the claim, assume for a contradiction that $ \abs{a -\lambda/2} < c_2$.  This implies that $\abs a < \abs\lambda/2+ c_2$, and so we have
\begin{align*}
\abs{a^2+\gamma -1} &\le \abs{ \left(\frac{\abs \lambda}{2} + c_2\right)^2 + \gamma -1 }
= \abs{ \frac{\abs\lambda^2}4 + c_2 \abs \lambda+ c_2^2+\gamma-1}
\\
&\le \abs{ \frac{\abs\lambda^2}4+\gamma-1} + \abs{c_2 \abs \lambda+ c_2^2}\\
&<\frac12\abs{a^2+\gamma -1}+2c_2+c_2^2,
\end{align*}
where the last inequality used the assumptions of Case III.  The chain of inequalities above implies that
$\frac12\abs{a^2+\gamma -1} +1 < (1+c_2)^2$, which is a contradiction when combined with the definition of $c_2$.  This proves the claim that $\abs{a -\lambda/2} \ge c_2$.  

To complete the proof of Case III, it is sufficient to show that either $(1-\lambda^2/4)c_2^2$  (from \eqref{e:ggam1}) or $\lambda^4c_2^4/4$ (from \eqref{e:ggam2}) is bounded from below.  This is easily done:  if $\abs\lambda < 1/2$, then $(1-\lambda^2/4)c_2^2 > \frac{15}{16}c_2^2$; and if $\abs{\lambda} \ge 1/2$, then $\lambda^4c_2^4/4> c_2^4/64$.  In both cases, the lower bound is a constant depending only on $a$ and $\gamma$, which completes the proof of Case III and hence also the proof of Lemma~\ref{lem:sing_min}.

\medskip

\noindent{\bf Acknowledgment:} The first author would like to thank Zhigang Bao for useful discussions. The authors would like to thank the anonymous referees for their valuable suggestions that have enhanced the readability of this paper.



\begin{thebibliography}{KMM{\etalchar{+}}13}

\bibitem[ADK22]{ADK22}
Johannes Alt, Raphael Ducatez, and Antti Knowles.
\newblock The completely delocalized region of the Erd{\H{o}}s-R{\'e}nyi graph.
\newblock {\em Electronic Communications in Probability},
  27, 2022.

\bibitem[AFH15]{AFH}
Omer Angel, Joel Friedman, and Shlomo Hoory.
\newblock The non-backtracking spectrum of the universal cover of a graph.
\newblock {\em Transactions of the American Mathematical Society},
  367(6):4287--4318, 2015.

\bibitem[Arn67]{Arnold1967}
Ludwig Arnold.
\newblock On the asymptotic distribution of the eigenvalues of random matrices.
\newblock {\em J. Math. Anal. Appl.}, 20:262--268, 1967.

\bibitem[Bas92]{Bass}
Hyman Bass.
\newblock The ihara-selberg zeta function of a tree lattice.
\newblock {\em International Journal of Mathematics}, 3(06):717--797, 1992.

\bibitem[BGBK17]{BBK17}
Florent Benaych-Georges, Charles Bordenave, and Antti Knowles.
\newblock Spectral radii of sparse random matrices.
\newblock {\em  Ann. Inst. Henri Poincar\'e Probab. Stat.}, 56(3):2141--2161, 2020.

\bibitem[BGK16]{BK16}
Florent Benaych-Georges and Antti Knowles.
\newblock Lectures on the local semicircle law for Wigner matrices.
\newblock {\em arXiv preprint arXiv:1601.04055}, 2016.

\bibitem[Bor15]{B15} 
Charles Bordenave.
\newblock A new proof of Friedman's second eigenvalue Theorem and its extension to random lifts.
\newblock {\em Annales Scientifiques de l École Normale Supérieure} 53(6), 2015.


\bibitem[BLM15]{BLM}
Charles Bordenave, Marc Lelarge, and Laurent Massouli{\'e}.
\newblock Non-backtracking spectrum of random graphs: community detection and
  non-regular ramanujan graphs.
\newblock {\em  Ann. Probab.}, 46(1): 1--71, 2018.

\bibitem[Cha09]{Cha09}
Djalil Chafa{\" i}.
\newblock Singular values of random matrices.
\newblock {\em Lecture Notes}, 2009.

\bibitem[Che52]{Chernoff52}
Herman Chernoff.
\newblock A measure of asymptotic efficiency for tests of a hypothesis based on
  the sum of observations.
\newblock {\em Ann. Math. Statistics}, 23:493--507, 1952.

\bibitem[CZ21]{CZ21}
Simon Coste and Yizhe Zhu.
\newblock Eigenvalues of the non-backtracking operator detached from the bulk.
\newblock {\em Random Matrices: Theory and Applications}, 10(03):2150028, 2021.


\bibitem[EKYY13]{EKYY}
L\'aszl\'o Erd{\H o}s, Antti Knowles, Horng-Tzer Yau, and Jun Yin.
\newblock Spectral statistics of {E}rd{\H o}s-{R}\'enyi graphs {I}: {L}ocal
  semicircle law.
\newblock {\em Ann. Probab.}, 41(3B):2279--2375, 2013.

\bibitem[FK81]{FK1981}
Z.~F\"uredi and J.~Koml\'os.
\newblock The eigenvalues of random symmetric matrices.
\newblock {\em Combinatorica}, 1(3):233--241, 1981.

\bibitem[HKM19]{HKM19}
Yukun He, Antti Knowles, and Matteo Marcozzi.
\newblock Local law and complete eigenvector delocalization for supercritical Erd{\H{o}}s--R{\'e}nyi graphs.
\newblock {\em Ann. Probab.}, 47(5):3278--3302, 2019.

\bibitem[GLM16]{GLM}
Lennart Gulikers, Marc Lelarge, and Laurent Massouli{\'e}.
\newblock Non-backtracking spectrum of degree-corrected stochastic block
  models.
\newblock {\em ITCS 2017-8th Innovations in Theoretical Computer Science}, 1--52, 2017.

\bibitem[Gre63]{Grenader1963}
Ulf Grenander.
\newblock {\em Probabilities on algebraic structures}.
\newblock John Wiley \& Sons, Inc., New York-London; Almqvist \& Wiksell,
  Stockholm-G\"oteborg-Uppsala, 1963.

\bibitem[Has89a]{H89}
Ki-ichiro Hashimoto.
\newblock Zeta functions of finite graphs and representations of {$p$}-adic
  groups.
\newblock In {\em Automorphic forms and geometry of arithmetic varieties},
  volume~15 of {\em Adv. Stud. Pure Math.}, pages 211--280. Academic Press,
  Boston, MA, 1989.

\bibitem[Has89b]{H}
Ki-ichiro Hashimoto.
\newblock Zeta functions of finite graphs and representations of p-adic groups.
\newblock {\em Automorphic forms and geometry of arithmetic varieties.}, pages
  211--280, 1989.
  
\bibitem[HK21]{HK21}
Yukun He and Antti Knowles.
\newblock Fluctuations of extreme eigenvalues of sparse Erd{\H{o}}s--R{\'e}nyi graphs.
\newblock {\em Probability Theory and Related Fields}, 1--72, 2021.

\bibitem[HLL83]{HLL1983}
Paul~W. Holland, Kathryn~Blackmond Laskey, and Samuel Leinhardt.
\newblock Stochastic blockmodels: first steps.
\newblock {\em Social Networks}, 5(2):109--137, 1983.

\bibitem[Hoe63]{Hoeffding1963}
Wassily Hoeffding.
\newblock Probability inequalities for sums of bounded random variables.
\newblock {\em J. Amer. Statist. Assoc.}, 58:13--30, 1963.

\bibitem[HLY20]{HLY20}
Jiaoyang Huang and Benjamin Landon and Horng-Tzer Yau.
\newblock Transition from Tracy--Widom to Gaussian fluctuations of extremal eigenvalues of sparse Erd{\H{o}}s--R{\'e}nyi graphs.
\newblock {\em The Annals of Probability}, 48(2):916--962, 2020.

\bibitem[Ivc73]{Ivchenko73}
Grigorii~Ivanovich Ivchenko.
\newblock On the asymptotic behavior of degrees of vertices in a random graph.
\newblock {\em Theory of Probability \& Its Applications}, 18(1):188--195,
  1973.

\bibitem[KMM{\etalchar{+}}13]{Krz13}
Florent Krzakala, Cristopher Moore, Elchanan Mossel, Joe Neeman, Allan Sly,
  Lenka Zdeborov{\'a}, and Pan Zhang.
\newblock Spectral redemption in clustering sparse networks.
\newblock {\em Proc. Natl. Acad. Sci. USA}, 110(52):20935--20940, 2013.

\bibitem[KN11]{KN2011}
Brian Karrer and M.~E.~J. Newman.
\newblock Stochastic blockmodels and community structure in networks.
\newblock {\em Phys. Rev. E (3)}, 83(1):016107, 10, 2011.

\bibitem[KP93]{KP93}
A.~M. Khorunzhy and L.~A. Pastur.
\newblock Limits of infinite interaction radius, dimensionality and the number
  of components for random operators with off-diagonal randomness.
\newblock {\em Comm. Math. Phys.}, 153(3):605--646, 1993.

\bibitem[KS03]{KS03}
Michael Krivelevich and Benny Sudakov.
\newblock The largest eigenvalue of sparse random graphs.
\newblock {\em Combin. Probab. Comput.}, 12(1):61--72, 2003.

\bibitem[LS18]{LS18}
Ji Oon Lee and Kevin Schnelli.
\newblock Local law and Tracy--Widom limit for sparse random matrices.
\newblock {\em Probability Theory and Related Fields}, 171(1):543--616, 2018.

\bibitem[LvHY18]{LvHY18}
Lata{\l a}, Rafa{\l } and van Handel, Ramon and Youssef, Pierre.
\newblock The dimension-free structure of nonhomogeneous random matrices.
\newblock {\em Inventiones mathematicae}, 214(3):1031--1080, 2018.

\bibitem[Mas14]{M2014}
Laurent Massouli{\'e}.
\newblock Community detection thresholds and the weak {R}amanujan property.
\newblock In {\em S{TOC}'14---{P}roceedings of the 2014 {ACM} {S}ymposium on
  {T}heory of {C}omputing}, pages 694--703. ACM, New York, 2014.

\bibitem[MNS13]{MNS13}
Elchanan Mossel, Joe Neeman, and Allan Sly.
\newblock A proof of the block model threshold conjecture.
\newblock {\em Combinatorica}, 38(3):665--708, 2018.

\bibitem[MNS15]{MNS15}
Elchanan Mossel, Joe Neeman, and Allan Sly.
\newblock Reconstruction and estimation in the planted partition model.
\newblock {\em Probab. Theory Related Fields}, 162(3-4):431--461, 2015.

\bibitem[SM20]{SM20}
Ludovic Stephan and Laurent Massouli{\'e}.
\newblock Non-backtracking spectra of weighted inhomogeneous random graphs.
\newblock {\em Mathematical Statistics and Learning}, 5(3):201--271, 2022.

\bibitem[Sil00]{Silvester}
John~R Silvester.
\newblock Determinants of block matrices.
\newblock {\em The Mathematical Gazette}, 84(501):460--467, 2000.

\bibitem[TV10]{TaoVu2010}
Terence Tao and Van Vu.
\newblock Random matrices: universality of {ESD}s and the circular law.
\newblock {\em Ann. Probab.}, 38(5):2023--2065, 2010.
\newblock With an appendix by Manjunath Krishnapur.

\bibitem[TVW13]{TVW13}
Linh~V. Tran, Van~H. Vu, and Ke~Wang.
\newblock Sparse random graphs: eigenvalues and eigenvectors.
\newblock {\em Random Structures Algorithms}, 42(1):110--134, 2013.

\bibitem[Vu07]{Vu07}
Van Vu.
\newblock {\em Spectral norm of random matrices}.
\newblock {\em Combinatorica}, 27(6):721--736, 2007.

\bibitem[Vu08]{Vu2008}
Van Vu.
\newblock {\em Random Discrete Matrices}, pages 257--280.
\newblock Springer Berlin Heidelberg, Berlin, Heidelberg, 2008.

\bibitem[Wig55]{Wigner1955}
Eugene~P. Wigner.
\newblock Characteristic vectors of bordered matrices with infinite dimensions.
\newblock {\em Ann. of Math.}, (2) 62:548--564, 1955.

\bibitem[Wig58]{Wigner1958}
Eugene~P. Wigner.
\newblock On the distribution of the roots of certain symmetric matrices.
\newblock {\em Ann. of Math.}, (2) 67:325--327, 1958.

\bibitem[Woo12]{Wood2012}
Philip~Matchett Wood.
\newblock Universality and the circular law for sparse random matrices.
\newblock {\em Ann. Appl. Probab.}, 22(3):1266--1300, 2012.

\bibitem[Woo16]{Wood2016}
Philip~Matchett Wood.
\newblock Universality of the {ESD} for a fixed matrix plus small random noise:
  a stability approach.
\newblock {\em Ann. Inst. Henri Poincar\'e Probab. Stat.}, 52(4):1877--1896,
  2016.

\end{thebibliography}




\newcommand{\etalchar}[1]{$^{#1}$}


\end{document}